\documentclass[a4paper,11pt]{amsart}

\usepackage{header}

\title[Rationalizability of field extensions]{Rationalizability of field extensions\\with a view towards Feynman integrals}

\author{Dino Festi, Andreas Hochenegger}

\begin{document}
\maketitle

\begin{abstract}
In 2021, Marco Besier and the first author introduced the concept of rationalizability of square roots to simplify arguments of Feynman integrals.
In this work, we generalize the definition of rationalizability to field extensions.
We then show that the rationalizability of a set of quadratic field extensions is equivalent to the rationalizability of the compositum of the field extensions,
providing a new strategy to prove rationalizability of sets of square roots of polynomials.
\end{abstract}

\section{Introduction}

Feynman integrals play an important role in high-energy physics, see for example \cite{Weinzierl} for a comprehensive introduction.
From a mathematical point of view, a Feynman integral is interesting, as the argument often contains a square root.
This square root makes the numerical evaluation of the integral computationally more expensive, preventing the achievement of higher precision that could lead to more incisive experiments.
This issue can often be overcome with a change of variables in the integrals making the square root disappear and leaving only a rational expression.
Unfortunately, such change of variables does not always exist and stating its (non-)existence can be a cumbersome problem for physicists with no strong background in algebraic geometry as well as for experienced geometers. 
This challenge led Marco Besier and the first author to introduce the concept of \emph{rationalizability} of square roots of polynomials, in~\cite{BF21}.
Roughly speaking, a square root of a polynomial is rationalizable if there is a rational change of variables making the square root disappear, i.e., turning the polynomial into a square.
In the same paper, some practical criteria for the rationalizability of square roots of polynomials in one and two variables are given.
Finally, the concept of rationalizability for \emph{sets} (also called \emph{alphabets} in that paper) of square roots of polynomials is introduced. 
On the one hand,  there is a practical strategy to disprove rationalizability of a given set (cf.~\cite[Proposition 47 and Remark 48]{BF21}), 
but on the other hand the original paper does not offer a solid strategy to \emph{prove} the rationalizability (cf.~\cite[Remarks 48 and 52]{BF21}).

In this paper, we try to fill in this gap.
To do so, we introduce the notion of rationalizability for field extensions, generalizing the concept of rationalizability for square roots.
\begin{definition}
Let $K$ be any field.
We say that a field extension $K\subset L$ is \emph{rationalizable} if there exists a non-zero homomorphism $\psi\colon L\to K$.

For $i\in \{ 1,\ldots,m\}$ let $L_i$ be a field extension of $K$, with $\iota_i\colon K\into L$ the natural inclusion.
We say that the set $\{ L_1,\ldots,L_m \}$ is \emph{rationalizable} if for each $i$ there is a non-zero homomorphism $\psi_i\colon L_i \to K$ such that
$$
\psi_1 \circ \iota_1 = \cdots= \psi_m \circ \iota_m\; .
$$
\end{definition}

\begin{example}
Consider $K=\bC (x)$, $p=z^2-(x-1)\in K[z]$ and 
$$
L\coloneqq K(\sqrt{x-1}) \cong \frac{K[z]}{(p)}.
$$
Then the inclusion $\iota \colon K \into L$ is rationalizable.
Indeed consider the map $\psi\colon L \to K$ defined as the identity on the elements of $\bC$ and by $x\mapsto x^2+1, z\mapsto x$.
With this definition, the map $\phi \coloneqq  \psi \circ \iota \colon K \to K$ sends $x-1$ to $x^2$.
\end{example}

\begin{example}
Consider $K=\bC (x)$ and set $L_1\coloneqq K (\sqrt{x}), L_2\coloneqq K(\sqrt{1-x})$.
Then the set of field extensions $\{ L_1, L_2\}$ is rationalizable.
Indeed, for $i=1,2$, consider the  homomorphisms
$\psi_i \colon L_i \to K$ defined as follows.
\begin{align*}
    \psi_1\colon & \hspace{10pt} \sqrt{x} \hspace{11pt} \mapsto \hspace{3pt} \frac{2x^2}{1+x^2}\\
    \psi_2 \colon & \sqrt{1-x} \hspace{3pt}\mapsto \hspace{3pt} \frac{1-x^2}{1+x^2}
\end{align*}

One can easily check that
$$
\psi_1 \circ \iota_1 = \psi_2 \circ \iota_2 \eqqcolon \phi\; ,
$$
where $\phi\colon K \to K$ is the homomorphism defined by 
$$
\phi\colon x \mapsto \left( \frac{2x^2}{1+x^2}\right)^2\; .
$$
Later we will see that the rationalizability of the set considered in this example does not depend on the special form of the polynomials but only on their degree, cf. \autoref{c:TwoLinear}.
\end{example}

\begin{remark}
The rationalizing map at the end of \cite[Remark 52]{BF21} is wrong.
The map $\sigma$ should send $X$ to $\frac{2X}{X^2+1}$.
Consequently, the composition $\sigma \circ \psi$ should send $X$ to $\left( \frac{2X}{X^2+1}\right)^2+1$. 
\end{remark}

Our main result is the following theorem, which shows that the rationalizability of a set of quadratic field extensions is equivalent to the rationalizability of the compositum of the field extensions.
Here and in the rest of the paper $\overline{K}$ will always denote a fixed algebraic closure of a field $K$.
\begin{theorem}\label{t:split}
Let $K$ be a field 
 $\alpha_1,\ldots,\alpha_m$ be  elements of $\overline{K}$ 
 such that all the extensions $L_i\coloneqq K(\alpha_i)$ are of degree two over $K$.
Then the set of extensions  $\{L_1,\ldots,L_m\}$ is rationalizable if and only if $K\subset K(\alpha_1,\ldots,\alpha_m)$ is.
\end{theorem}
This allows us to develop a new strategy to prove rationalizability of sets of square roots of polynomials (cf.~\autoref{r:Strategy}). 

We proceed as follows.
In~\autoref{s:Preliminaries} we recall the notion of rationalizability for square roots of polynomials and some well-known facts about field extensions.
In~\autoref{s:Rationalizability} 
we  prove  \autoref{t:split} 
and we outline a strategy to prove rationalizability of sets of square roots.
Finally, in~\autoref{s:Applications} we apply \autoref{t:split} to some sets of square roots of polynomials.
The results thus obtained, joint with~\cite[Proposition 47]{BF21}, 
led us to formulate the following conjecture.
\begin{conjecture}\label{conj:Sets}
The set of square roots $\left\{ \sqrt{f_1},\ldots,\sqrt{f_m} \right\}$ is rationalizable if and only if for every non-empty subset $J\subseteq \{1,\ldots,m\}$ the square root 
$$
\sqrt{\prod_{j\in J} f_j}
$$ 
is rationalizable. 
\end{conjecture}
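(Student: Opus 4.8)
The plan is to reduce the two-sided statement to a single genus computation on the compositum, so that the symmetric role of all the products $\prod_{j\in J}f_j$ becomes transparent. Take $K=\bC(x)$, and let $L=K(\sqrt{f_1},\dots,\sqrt{f_m})$ be the compositum; by \autoref{t:split} the set $\{\sqrt{f_1},\dots,\sqrt{f_m}\}$ is rationalizable if and only if $K\subset L$ is. A nonzero homomorphism $\psi\colon L\to K$ is automatically injective (its source is a field), so it embeds the one-variable function field $L$ into $\bC(x)$; by L\"uroth's theorem such an embedding exists iff $L$ is itself a rational function field, i.e.\ iff the smooth projective curve $C$ with function field $L$ has genus $0$. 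The \emph{same} dictionary governs each quadratic piece: $\sqrt{\prod_{j\in J}f_j}$ is rationalizable iff the curve $C_J\colon y^2=\prod_{j\in J}f_j$ has genus $0$ (with the convention that when the product is a square the root lies in $K$ and is rationalizable for free). Thus the conjecture is equivalent to the purely geometric assertion $g_C=0\iff g_{C_J}=0$ for every nonempty $J$.

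The heart of the argument is a genus formula for the multiquadratic cover $C\to C/G=\mathbf{P}^{1}$, where $G=\mathrm{Gal}(L/K)$ is elementary abelian of rank $r$, with $r$ the rank of the subgroup $V$ generated by the classes $[f_1],\dots,[f_m]$ in $K^{\ast}/(K^{\ast})^{2}$. I would decompose the holomorphic differentials into character eigenspaces, $H^{0}(C,\Omega^{1})=\bigoplus_{\chi\in\widehat{G}}H^{0}(C,\Omega^{1})_{\chi}$. The trivial component is $H^{0}(C,\Omega^{1})^{G}=H^{0}(\mathbf{P}^{1},\Omega^{1})=0$, and for a nontrivial $\chi$ the $\ker\chi$-invariants $H^{0}(C_\chi,\Omega^{1})=H^{0}(C,\Omega^{1})^{\ker\chi}$ are exactly the trivial and the $\chi$-eigenspace, so $g_{C_\chi}=\dim H^{0}(C,\Omega^{1})_{\chi}$ for the intermediate double cover $C_\chi=C/\ker\chi$. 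Summing over $\chi$ yields the clean identity $g_C=\sum_{\chi\ne 1}g_{C_\chi}$ (for $r=2$ this is the classical relation of Accola and of Kani--Rosen, $g_C=g_1+g_2+g_3$). Since all genera are nonnegative, $g_C=0$ iff every $g_{C_\chi}=0$.

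It then remains to match characters with subsets. By Kummer theory the nontrivial characters of $G$ are in bijection with the nonzero classes of $V$, the class $[\prod_{j\in J}f_j]$ corresponding to the character cutting out $K(\sqrt{\prod_{j\in J}f_j})$; distinct $J$ with the same square class give the same curve, while those $J$ whose product is a square correspond to the trivial character and are rationalizable trivially. Hence ``$g_{C_\chi}=0$ for all $\chi\ne 1$'' is precisely ``$\sqrt{\prod_{j\in J}f_j}$ rationalizable for all nonempty $J$'', and combined with the eigenspace identity and \autoref{t:split} this settles the conjecture in the one-variable case. The easy implication also drops out directly and purely algebraically: a common $\phi$ making each $\phi(f_i)$ a square makes every $\phi\!\left(\prod_{j\in J}f_j\right)=\prod_{j\in J}\phi(f_j)$ a square, so set-rationalizability always forces each product to be rationalizable.

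The hard part, and the reason the statement is recorded as a conjecture, is the passage to several variables. For $K=\bC(x_1,\dots,x_n)$ with $n\ge 2$, rationalizability is the (uni)rationality of a higher-dimensional elementary abelian $2$-cover, not the vanishing of one additive numerical invariant, and there is no substitute for the genus that is simultaneously additive over characters and detects rationality; the eigenspace bookkeeping above then has no analogue. Overcoming this would require either a rationality criterion for $(\mathbf{P}^{1}\text{-type})$ multiquadratic covers that is controlled by their quadratic subcovers, or an explicit inductive construction gluing the individual rationalizing substitutions $\phi_J$ into a single $\phi$. Since producing such a uniform $\phi$ already in the case $m=2$ genuinely uses the geometry (cf.\ \autoref{c:TwoLinear}), a direct algebraic gluing in higher dimension appears to be the principal obstacle.
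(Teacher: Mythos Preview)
The paper does not prove this statement: it is recorded as a \emph{conjecture}, supported only by the evidence of Corollaries~\ref{c:TwoLinear}, \ref{c:Linearset}, \ref{c:QuadraticJacobianset}, \ref{c:QuadrAlphJacTwoVar} together with the forward implication, which is \cite[Proposition~47]{BF21}. There is therefore no ``paper's own proof'' to compare against.

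Your proposal goes genuinely further than the paper in one direction and is honest about its limits in the other. The forward implication you give (a common $\phi$ turning each $f_i$ into a square turns every product into a square) is exactly \cite[Proposition~47]{BF21}, which the paper already cites. The substantive new content is your argument for the converse when $K=\mathbb C(x)$: reducing via \autoref{t:split} and L\"uroth to the vanishing of the genus of the compositum curve $C$, decomposing $H^0(C,\Omega^1)$ into $G$-eigenspaces for the elementary abelian $2$-group $G=\mathrm{Gal}(L/K)$, and identifying each nontrivial eigenspace with $H^0(C_\chi,\Omega^1)$ for the corresponding quadratic intermediate cover. The resulting identity $g_C=\sum_{\chi\neq 1} g_{C_\chi}$ (the Accola/Kani--Rosen relation for $(\mathbb Z/2)^r$-covers of $\mathbb P^1$) is correct, and together with the Kummer-theoretic matching of characters with square classes of the products $\prod_{j\in J} f_j$ it does settle the one-variable case. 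This is a real strengthening of what the paper claims: the paper only verifies the conjecture in a handful of explicit low-degree examples, whereas your argument handles all $f_1,\dots,f_m\in\mathbb C[x]$ at once.

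That said, your proposal is not a proof of the conjecture as stated, since the $f_i$ are allowed to lie in $\kk[x_1,\dots,x_n]$ for arbitrary $n$, and you explicitly stop at $n\ge 2$. Your diagnosis of the obstacle is accurate: for $n\ge 2$ rationalizability is (uni)rationality of a higher-dimensional multiquadratic cover, and there is no additive birational invariant playing the role of the genus in the eigenspace count. So the status after your write-up is: forward implication known (and already in the literature), converse proved for $n=1$ by your argument, and the converse for $n\ge 2$ still open, which is consistent with the paper's decision to state it as a conjecture.
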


\subsection*{Acknowledgments} We thank Marco Besier for stimulating conversations and useful remarks. 
We also thank the anonymous referee for reading carefully through the text and their useful suggestions.

\section{Preliminaries}\label{s:Preliminaries}

In this section we recall the notion of rationalizability of square roots introduced in~\cite{BF21} and we extend it to higher order roots;
we also recall some basic results in field theory.
Proofs are omitted or referred to the original sources.

\subsection{Rationalizability of square roots}

Let $\kk$ be any field and consider the $\kk$-algebra $Q\coloneqq \kk(x_1,\ldots,x_n)$.

\begin{definition}
Consider $q \in Q$.
We say that the square root $\sqrt{q}$ is \emph{rationalizable} if there exists $\phi \colon Q \to Q$ such that $\phi(q)$ is a square in $Q$.

We say that a family of square roots $\{\sqrt{q_1},\ldots,\sqrt{q_m}\}$ is \emph{rationalizable} if there exists $\phi \colon Q \to Q$ such that $\phi(q_i)$ is a square in $Q$ for $i \in \{1,\ldots,m\}$.
\end{definition}

We recall the main theorems of~\cite{BF21} as they will serve as a starting point for the theory developed here.
The notation and statements are slightly changed to better fit the content of this article.

\begin{theorem}[{\cite[Theorem 1]{BF21}}]
The following statements hold:
\begin{enumerate}
    \item Given $q \in Q = \kk(x_1,\ldots,x_n)$, there exists a squarefree polynomial $f\in R = \kk[x_1,\ldots,x_n]$ such that $\sqrt{q}$ is rationalizable if and only if $\sqrt{f}$ is.
    \item For a squarefree polynomial $f \in R$, the square root $\sqrt{f}$ is rationalizable if and only if the variety $V(z^2-f)\subset \Spec R[z] = \bA^{n+1}_\kk$ is unirational.
    \item Assume $\kk$ is algebraically closed and let $d$ denote the degree of a squarefree polynomial $f \in R$.   
    If $d=1,2$, or if $\overline{V}$ has a singular point of multiplicity $d-1$, then $\sqrt{f}$ is rationalizable.
\end{enumerate}
\end{theorem}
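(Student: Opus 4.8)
The plan is to prove the three parts in order, reducing each statement to a question about the field $Q=\kk(x_1,\dots,x_n)$ and the function field of $V$. For part (1) I would first clear denominators: writing $q=a/b$ with $a,b\in R$, we have $q=ab/b^2$, and since any $\kk$-homomorphism $\phi\colon Q\to Q$ is injective (as $Q$ is a field) we have $\phi(b)\neq 0$ and $\phi(q)=\phi(ab)\,\phi(b)^{-2}$, so $\phi(q)$ is a square if and only if $\phi(ab)$ is. Thus I may assume $q\in R$. Now I would use that $R$ is a UFD: factor $q=c\prod_i\pi_i^{e_i}$ as a constant times powers of pairwise non-associate irreducibles, and set $f:=c\prod_{e_i\text{ odd}}\pi_i$, the squarefree part of $q$ (keeping the constant, since $\kk$ need not be algebraically closed here). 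Then $q=f\cdot s^2$ with $s\in Q^\times$, whence $\phi(q)=\phi(f)\,\phi(s)^2$ is a square if and only if $\phi(f)$ is; so this $f$ witnesses the claim.

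For part (2) the key is to identify the function field of $V$. The coordinate ring of $V$ is $R[z]/(z^2-f)$, and, assuming $f\notin(Q^\times)^2$ (otherwise $\sqrt f\in Q$ is trivially rationalizable via $\phi=\mathrm{id}$, and $V$ is reducible with rational components), the polynomial $z^2-f$ is irreducible over $Q$, so $V$ is integral of dimension $n$ with $\kk(V)=Q(\sqrt f)$, a quadratic extension of $Q$. I would then invoke the characterization that an $n$-dimensional variety is unirational precisely when $\kk(V)$ embeds over $\kk$ into a purely transcendental extension of transcendence degree $n=\dim V$, which is $\kk$-isomorphic to $Q$. Both directions are now formal. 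If $\sqrt f$ is rationalizable, choose $\phi\colon Q\to Q$ with $\phi(f)=h^2$; then $z\mapsto h$ together with $\phi$ defines a $\kk$-homomorphism $\Psi\colon Q[z]/(z^2-f)\to Q$, i.e.\ an embedding $\kk(V)\hookrightarrow Q$, so $V$ is unirational. Conversely, an embedding $\Psi\colon \kk(V)=Q(\sqrt f)\hookrightarrow Q$ restricts to $\phi:=\Psi|_Q\colon Q\to Q$ with $\phi(f)=\Psi(\sqrt f)^2\in(Q^\times)^2$, so $\sqrt f$ is rationalizable.

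For part (3), by part (2) it suffices to show that $\overline V$ is unirational, and I would do this by exhibiting rationality. When $d\le 2$ the polynomial $z^2-f$ has degree $2$, so $\overline V\subset\mathbb{P}^{n+1}$ is a quadric hypersurface; over the algebraically closed field $\kk$ it admits a smooth point $P$ (it is not a double hyperplane, since $z^2-f$ has no linear factor involving $z$), and stereographic projection from $P$ gives a birational map $\overline V\dashrightarrow\mathbb{P}^n$. When $d\ge 3$ the hypersurface $\overline V$ has degree $d$ and, by hypothesis, a point $P$ of multiplicity $d-1$; projecting from $P$, a general line through $P$ meets $\overline V$ with total intersection multiplicity $d$, of which $d-1$ is absorbed at $P$, leaving a single residual point. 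This residual point depends rationally on the line and recovers a rational inverse, so $\pi_P\colon\overline V\dashrightarrow\mathbb{P}^n$ is birational. In every case $\overline V$, hence $V$, is rational and therefore unirational, so part (2) yields the rationalizability of $\sqrt f$.

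The routine content is in parts (1) and (2), which are essentially unique factorization modulo squares and a field-theoretic dictionary; the only care needed there is matching the chosen definition of unirationality with the transcendence degree (and, if a parametrization of dimension larger than $n$ is produced, restricting to a general $n$-dimensional linear subspace to preserve dominance). The main obstacle is the geometric heart of part (3): verifying that projection from the multiplicity-$(d-1)$ point is genuinely \emph{birational} rather than merely dominant — that is, that the residual point is unique for a general line and varies rationally — together with the degenerate-quadric bookkeeping in the case $d\le 2$.
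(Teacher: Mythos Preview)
The paper does not actually prove this theorem: it is quoted verbatim from \cite{BF21}, and the surrounding text says ``Proofs are omitted or referred to the original sources.'' So there is no in-paper proof to compare your proposal against.

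That said, your sketch is correct and follows the standard route. Your argument for part~(2) --- identifying $\kk(V)$ with $Q[z]/(z^2-f)$ and translating unirationality into the existence of an embedding into $Q$ --- is precisely the argument the paper later gives (in slightly greater generality, for $Q[z]/(p)$ with $p$ arbitrary irreducible) in the proof of Proposition~\ref{p:alggeom}; so on that point your approach and the paper's coincide. Your reductions in part~(1) via $q=ab/b^2$ and the squarefree kernel in the UFD $R$ are exactly what one finds in \cite[\S2]{BF21}, and your part~(3) is the classical projection-from-a-point argument for a degree-$d$ hypersurface with a $(d{-}1)$-fold point (resp.\ a quadric with a smooth point). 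The caveats you flag --- matching transcendence degrees in the definition of unirationality, and checking that the projection in~(3) is genuinely birational rather than merely dominant --- are the only places requiring care, and both are routine.
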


\begin{corollary}
Consider a squarefree polynomial $f \in \kk[x]$.
Then the square root of $f$ is rationalizable if and only if $V(z^2-f) \subset \bA^2$ has geometric genus zero.
\end{corollary}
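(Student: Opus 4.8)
The plan is to combine parts~(1) and~(2) of \cite[Theorem 1]{BF21} with L\"uroth's theorem. First I would invoke part~(1) to replace $f$ by its squarefree part, so that we may assume $f\in\kk[x]$ is squarefree of degree at least $1$ (the case in which the squarefree part of $f$ is a constant being immediate, and harmless on both sides of the equivalence since $V(z^2-f)$ and $V(z^2-f_0)$ are birational whenever $f_0$ is the squarefree part of $f$, hence share the same geometric genus). Then $f$ is not a square in $\kk(x)$, so $z^2-f$ is irreducible in $\kk[x,z]$ and $C:=V(z^2-f)\subset\bA^2$ is an irreducible affine curve. By part~(2) of the cited theorem, $\sqrt f$ is rationalizable if and only if $C$ is unirational, so the corollary reduces to showing that such a curve $C$ is unirational if and only if it has geometric genus zero.

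For the forward implication: if $C$ is unirational, then, after restricting a dominant rational map from a projective space to a general line, there is a dominant rational map from $\bA^1$ to $C$, hence an inclusion of function fields $\kk(C)\hookrightarrow\kk(t)$. Since $\kk(C)$ has transcendence degree one over $\kk$, L\"uroth's theorem gives $\kk(C)\cong\kk(u)$, so $C$ is rational; its smooth projective model is then the projective line, and in particular the geometric genus of $C$ is zero. Conversely, if $C$ has geometric genus zero, its smooth projective model $\widetilde C$ is a curve of genus zero, and I would produce a $\kk$-rational point on $\widetilde C$ --- for instance a point lying over a root of $f$, or a point at infinity --- which forces $\widetilde C$ to be isomorphic to the projective line, so that $C$ is rational and, in particular, unirational. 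When $\kk$ is algebraically closed (the setting of the applications) the existence of such a point is automatic and this step disappears.

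An essentially equivalent but more hands-on route bypasses L\"uroth: for $f$ squarefree of degree $d$, the standard hyperelliptic computation --- Riemann--Hurwitz applied to the degree-two map $C\to\bA^1$, $(x,z)\mapsto x$, together with a local analysis of the ramification and of the point(s) at infinity --- gives that the geometric genus of $C$ equals $\lfloor (d-1)/2\rfloor$, which vanishes exactly when $d\le 2$; and part~(3) of \cite[Theorem 1]{BF21}, or an explicit parametrization in those degrees, shows that $\sqrt f$ is rationalizable precisely for $d\le 2$.

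I expect the main obstacle to be bookkeeping rather than any single hard computation: one must be careful when passing from the affine curve $C$, whose closure in the projective plane is badly singular at infinity, to its smooth projective model in order to define and compute the geometric genus; and, if $\kk$ is not assumed algebraically closed, one must justify the $\kk$-rational point used to upgrade ``geometric genus zero'' to ``$\kk$-rational'', which is exactly where the hyperelliptic involution $z\mapsto -z$ and the points over the roots of $f$ are used.
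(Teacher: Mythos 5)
Your argument is correct and is evidently the intended one: the paper omits the proof of this corollary (all proofs in the preliminaries section are ``omitted or referred to the original sources''), and the statement follows exactly as you say from part~(2) of the cited theorem together with L\"uroth's theorem, which for curves upgrades unirational to rational, hence genus zero. Your remark that the implication ``geometric genus zero $\Rightarrow$ rational'' needs a $\kk$-rational point when $\kk$ is not algebraically closed is a genuine subtlety that the paper glosses over (e.g.\ $z^2=-x^2-1$ over $\mathbb{R}$ has geometric genus zero but $\sqrt{-x^2-1}$ is not rationalizable there), so the corollary should really be read under the standing assumption that $\kk$ is algebraically closed, as in all of the paper's applications.
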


\subsection{Rationalizability of roots of higher order}
One can naturally generalize the concept of rationalizability of square roots to roots of higher order.
Although it is a very natural step, this subsection will not be needed in the rest of the paper and non-interested reader can safely skip it.
Also in this subsection, let $\kk$ be any field and $Q$ the $\kk$-algebra $\kk(x_1,\ldots,x_n)$.
Consider $q\in Q$ of degree $d$ and an integer $e\geq 2$. 

\begin{definition}
 We say that the root $\sqrt[e]{q}$ is \emph{rationalizable} if there exists a homomorphism of $\kk$-algebras $\phi\colon Q\to Q$ such that $\phi (q)$ is an $e$-power in $Q$, that is, $\phi(q)=h^e$, for some element $h\in Q$.
We say that a family of roots $\{\sqrt[e_1]{q_1},\ldots, \sqrt[e_m]{q_m} \}$ is rationalizable if there exists a homomorphism of $\kk$-algebras $\phi\colon Q\to Q$ such that for every $i=1,\ldots,m$ we have $\phi (q_i)=h_i^{e_i}$ for some $h_i\in Q$
\end{definition}

Working exactly as in~\cite[\S 2]{BF21}, one can prove the following result.

\begin{proposition}\label{p:HigherOrder}
The following statements hold:
\begin{enumerate}
    \item Given $q \in Q = \kk(x_1,\ldots,x_n)$, there exists a polynomial $f\in R = \kk[x_1,\ldots,x_n]$ such that its factorization does not contain any $e$-th power and  $\sqrt[e]{q}$ is rationalizable if and only if $\sqrt[e]{f}$ is.
    \item For $f\in R$ whose factorization does not contain any $e$-th power, the root $\sqrt[e]{f}$ is rationalizable if and only if the variety $V(z^e-f)\subset \Spec R[z] = \bA^{n+1}_\kk$ is unirational.
\end{enumerate}
\end{proposition}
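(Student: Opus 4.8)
The plan is to mirror the proof of~\cite[Theorem 1]{BF21} for square roots, replacing ``square'' by ``$e$-th power'' throughout; the essential point is that rationalizability of $\sqrt[e]{q}$ is insensitive to multiplying the radicand $q$ by an $e$-th power.

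First, for part (1), I would record the following multiplicative invariance: if $q = q' \cdot w^e$ for some nonzero $w \in Q$, then $\sqrt[e]{q}$ is rationalizable if and only if $\sqrt[e]{q'}$ is. Indeed, any $\kk$-algebra homomorphism $\phi \colon Q \to Q$ is injective (as $Q$ is a field), so $\phi(w) \ne 0$ and $\phi(q) = \phi(q')\,\phi(w)^e$; hence $\phi(q)$ is an $e$-th power exactly when $\phi(q')$ is, and the same $\phi$ witnesses both directions. Now I write $q = g/h$ with $g, h \in R$; since $q = (g\,h^{e-1})/h^e$, the invariance with $w = 1/h$ reduces rationalizability of $\sqrt[e]{q}$ to that of the polynomial $F := g\,h^{e-1} \in R$. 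Factoring $F = c \prod_i p_i^{a_i}$ into irreducibles and writing $a_i = e\,b_i + r_i$ with $0 \le r_i < e$, I set $f := c \prod_i p_i^{r_i}$ and $w' := \prod_i p_i^{b_i}$, so that $F = f \cdot (w')^e$. By construction the factorization of $f$ contains no $e$-th power, and a second application of the invariance gives that $\sqrt[e]{F}$, hence $\sqrt[e]{q}$, is rationalizable if and only if $\sqrt[e]{f}$ is. This proves (1).

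For part (2), I would translate rationalizability of $\sqrt[e]{f}$ into a statement about the function field of $V := V(z^e - f) \subset \bA^{n+1}_\kk$, whose function field is $Q[z]/(z^e - f) = Q(\sqrt[e]{f})$, a degree-$e$ extension of $Q$, and which has $\dim V = n$. Suppose first that $\sqrt[e]{f}$ is rationalizable, so there is $\phi \colon Q \to Q$ and $h \in Q$ with $\phi(f) = h^e$. Viewing $\phi$ as a dominant rational self-map of $\bA^n_\kk$, the tuple $(\phi(x_1), \ldots, \phi(x_n), h)$ defines a rational map $\bA^n_\kk \dashrightarrow V$, since the last coordinate satisfies $h^e = \phi(f) = f(\phi(x_1), \ldots, \phi(x_n))$; composing with the projection $V \to \bA^n_\kk$ recovers $\phi$, whose image is dense, so the image of our map has dimension $n = \dim V$ and the map is dominant, exhibiting $V$ as unirational. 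Conversely, a dominant rational map $\bA^n_\kk \dashrightarrow V$ induces an embedding $Q(\sqrt[e]{f}) \hookrightarrow \kk(s_1, \ldots, s_n)$; restricting to $Q$ gives $\phi \colon Q \to \kk(s_1, \ldots, s_n) \cong Q$, and the image $h$ of $\sqrt[e]{f}$ satisfies $h^e = \phi(f)$, so $\sqrt[e]{f}$ is rationalizable.

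The main obstacle, absent when $e = 2$, is ensuring that $V$ is irreducible, so that ``function field'' and hence ``unirational'' are unambiguous: the condition that the factorization of $f$ contain no $e$-th power does \emph{not} force $z^e - f$ to be irreducible over $Q$ (for instance $z^4 - x_1^2 = (z^2-x_1)(z^2+x_1)$, although $x_1^2$ has no fourth-power factor). One must therefore invoke the Vahlen--Capelli criterion for irreducibility of $z^e - f$ and, where it fails, either strengthen the normalization of $f$ carried out in (1) or interpret the statement on the appropriate component of $V$; verifying that the reduction in (1) is compatible with this choice is the one place where the argument genuinely departs from the square-root case of~\cite[\S 2]{BF21}.
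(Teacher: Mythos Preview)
Your approach is precisely what the paper intends: the paper does not give an independent proof but simply states that one proceeds ``exactly as in~\cite[\S 2]{BF21}'', and your write-up is a faithful execution of that plan, replacing squares by $e$-th powers throughout.

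You have in fact gone further than the paper by flagging the irreducibility issue in your final paragraph. This is a genuine subtlety that the paper glosses over: for $e=2$ the hypothesis ``$f$ squarefree'' forces $z^2-f$ to be irreducible over $Q$, so $V(z^2-f)$ is an irreducible variety and its function field is well defined; but for $e>2$ the hypothesis ``no $e$-th power in the factorization of $f$'' does \emph{not} guarantee irreducibility of $z^e-f$, as your example $z^4-x_1^2$ shows. The paper's later Proposition~\ref{p:alggeom} explicitly uses $\Frac(R[z]/(p))=k(V(p))$, which presupposes irreducibility, so the same issue lurks there. Your suggested remedies (Vahlen--Capelli, or a sharper normalization in part~(1)) are the right ones; the paper simply does not engage with this point.
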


\begin{remark}\label{r:GenusHigherOrder}
Consider $f\in\bC [x]$ such that its factorization does not contain any $e$-th power.
Assume that the curve $C=V(z^e-f)$ is irreducible. 

Then following \cite[\S 2]{Kon96}, we may assume that $C$ is given by the equation
\begin{equation}
\label{eq:C}
C\colon z^e = x^{\ell_0} \cdot \prod_{i=1}^m (x-a_i)^{\ell_i}
\end{equation}
where $\sum_{i=0}^m \ell_i \equiv 0 \bmod e$.

To see this, write $f = \prod_{i=1}^m (x-\alpha_i)^{\ell_i}$ and let $d = \sum_{i=1}^m \ell_i$.
Without loss of generality, we may assume that all $\alpha_i$ are non-zero.
Consider the weighted homogenization of $z^e-f$ with respect to $y$, where $\deg(z) = \lceil \frac d e \rceil$, $\deg(x)=1 = \deg(y)$. 
By dehomogenizing the new equation with respect to $x$, we get equation \eqref{eq:C} with $a_i = \frac 1 {\alpha_i}$.

Moreover, we have again by  \cite[\S 2]{Kon96}
that its geometric genus is
$$
g(C)=\frac{(e-1)(s-2)}{2},
$$
where
$$
s=\begin{cases}
m &\textrm{if } m\equiv 0 \bmod e\; ,\\
m+1 &\textrm{otherwise}\; ,
\end{cases}
$$
and $m$ is defined as in equation \eqref{eq:C}.

In view of~\autoref{p:HigherOrder}, this yields that 
the root $\sqrt[e]{f}$ is rationalizable if and only if $s=2$,
i.e., $m=e=2$ or $m=1$.
\end{remark}

\subsection{Field theory}

Before a further generalization of the notion of rationalizability,  
we recall a few well-known facts about algebraic field extensions. 
For more information and details see, for example, \cite[Chapter 3]{Bos18}.

\begin{definition}
The field extension $K\subset L$ is called \emph{separable} if for every element $\alpha\in L$, the minimal polynomial of $\alpha$ over $K$ is separable, i.e., it has no repeated roots in any field extension of $K$.
\end{definition}

\begin{proposition}[{\cite[Proposition 3.6.2]{Bos18}}]
If $K$ is a field of characteristic $0$, then every algebraic extension is separable.
\end{proposition}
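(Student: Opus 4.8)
The plan is to unwind the definition of separability into a statement about a single polynomial and then apply the classical derivative criterion for repeated roots. By definition the extension $K\subset L$ is separable precisely when, for every $\alpha\in L$, the minimal polynomial $p\in K[z]$ of $\alpha$ over $K$ has no repeated root in any extension of $K$. Since every element of an algebraic extension is algebraic over $K$ and the claim quantifies over all such $\alpha$, it suffices to prove the following purely polynomial fact: in characteristic $0$, every irreducible polynomial $p\in K[z]$ is separable, i.e.\ has no repeated root in a splitting field.

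First I would recall the standard criterion: a polynomial $p$ has a repeated root in some extension of $K$ if and only if $p$ and its formal derivative $p'$ share a non-constant common factor, i.e.\ $\gcd(p,p')\neq 1$. I would take this as known, as it follows from the product rule applied to a factorization of $p$ over its splitting field. The key leverage then comes from irreducibility: the only monic divisors of $p$ in $K[z]$ are $1$ and $p$ itself, so $\gcd(p,p')$ is, up to a scalar, either $1$ or $p$. In the latter case $p\mid p'$; but $\deg p'<\deg p$ (since $\deg p\geq 1$, because $\alpha$ is algebraic and $p$ is its minimal polynomial), which forces $p'=0$.

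The crux --- and the only point where the hypothesis $\operatorname{char}K=0$ enters --- is ruling out $p'=0$. Writing $p=\sum_{k=0}^{n}a_k z^k$ with $a_n\neq 0$ and $n\geq 1$, one has $p'=\sum_{k=1}^{n}k\,a_k z^{k-1}$, so $p'=0$ means $k\,a_k=0$ in $K$ for every $k\geq 1$. In characteristic $0$ the integer $k$ is a nonzero element of $K$ for each $k\geq 1$, hence $a_k=0$ for all $k\geq 1$, so $p$ would reduce to the constant $a_0$, contradicting $n\geq 1$. Therefore $p'\neq 0$, so $\gcd(p,p')=1$, and $p$ has no repeated roots. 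As $\alpha\in L$ was arbitrary, every minimal polynomial over $K$ is separable, whence $K\subset L$ is separable.

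I do not expect a genuine obstacle here: the substantive input is the derivative criterion combined with irreducibility, while the characteristic-$0$ assumption is used only to guarantee that $k\neq 0$ in $K$. This is exactly the step that fails in positive characteristic, where $p'=0$ can occur for irreducible $p$ (e.g.\ $p(z)=z^{\,\ell}-t$ over $\mathbb{F}_\ell(t)$) and inseparable extensions genuinely arise, which is a reassuring consistency check that the hypothesis is being used in the right place.
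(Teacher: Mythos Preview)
Your argument is correct and is exactly the standard proof via the derivative criterion. Note, however, that the paper does not supply a proof of this proposition: the surrounding section explicitly states that proofs are omitted or referred to the original source \cite[Proposition~3.6.2]{Bos18}, so there is no in-paper proof to compare against.
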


\begin{proposition}[Primitive element theorem]
Let $K \subset L$ be a separable field extension. Then there is an irreducible element $p \in K[z]$ such that $L \cong K[z]/(p)$.
\end{proposition}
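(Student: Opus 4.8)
The plan is to reduce to the case of an extension generated by two elements and then exhibit an explicit primitive element as a $K$-linear combination of the two generators. First I would observe that the conclusion $L \cong K[z]/(p)$ with $p$ irreducible forces $[L:K] = \deg p < \infty$, so the statement is really about \emph{finite} separable extensions; accordingly I assume $[L:K] < \infty$ and write $L = K(\beta_1,\dots,\beta_r)$. By induction on $r$ it suffices to treat $r=2$: once we know $K(\beta_1,\beta_2) = K(\theta)$ for some $\theta$, we may replace the first two generators by $\theta$ and repeat. So the whole problem comes down to showing that a separable extension $L = K(\beta,\gamma)$ is simple, and then taking $p$ to be the minimal polynomial over $K$ of the primitive element, with the isomorphism $K[z]/(p) \cong L$ given by evaluation at that element.

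For the two-generator case I would split according to whether $K$ is finite or infinite. If $K$ is finite, then $L$ is a finite field, its multiplicative group $L^\times$ is cyclic, and any generator $\alpha$ of $L^\times$ clearly satisfies $L = K(\alpha)$; take $p$ to be the minimal polynomial of $\alpha$ over $K$. If $K$ is infinite, let $f,g \in K[z]$ be the minimal polynomials of $\beta,\gamma$, and pass to a splitting field $M$ of $fg$ containing $L$. Denote by $\beta = \beta_1,\beta_2,\dots$ the roots of $f$ in $M$ and by $\gamma = \gamma_1,\gamma_2,\dots$ the roots of $g$ in $M$; separability of $L/K$ ensures that the $\gamma_j$ are pairwise distinct. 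Since $K$ is infinite I can pick $c \in K$ avoiding the finitely many values $(\beta_i - \beta)/(\gamma - \gamma_j)$ with $\gamma_j \neq \gamma$, and set $\theta := \beta + c\gamma$.

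It remains to check $K(\theta) = K(\beta,\gamma)$. The polynomials $g(z)$ and $h(z) := f(\theta - cz)$ both lie in $K(\theta)[z]$ and both have $\gamma$ as a root; by the choice of $c$, they have no common root other than $\gamma$ in $M$, so their monic greatest common divisor computed in $K(\theta)[z]$ is $z - \gamma$. Hence $\gamma \in K(\theta)$, and then $\beta = \theta - c\gamma \in K(\theta)$ as well, giving $K(\theta) = K(\beta,\gamma) = L$; let $p$ be the minimal polynomial of $\theta$ over $K$. The main obstacle — indeed essentially the only subtle point — is the availability of a good $c$ in the infinite-field case: one needs both that the excluded set is finite (which uses $[L:K] < \infty$, so that $f$ and $g$ have finitely many roots) and that $g$ is separable (which is exactly where the hypothesis on $L/K$ is used, to guarantee the $\gamma_j$ are distinct so that the fractions $(\beta_i-\beta)/(\gamma-\gamma_j)$ make sense and are finite in number). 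Everything else is either the cyclicity of the multiplicative group of a finite field or the routine $\gcd$ computation in $K(\theta)[z]$.
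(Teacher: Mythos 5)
Your proof is correct. Note that the paper does not actually prove this proposition: it defers entirely to the cited reference (Bosch, Proposition 3.6.12), so there is no in-paper argument to compare against. What you give is the classical Lagrange-style proof: reduce to two generators, handle finite $K$ via cyclicity of $L^\times$, and for infinite $K$ choose $c$ avoiding the finitely many bad ratios $(\beta_i-\beta)/(\gamma-\gamma_j)$ so that $\gcd\bigl(g(z),\,f(\theta-cz)\bigr)=z-\gamma$ in $K(\theta)[z]$. The two points you flag as the crux are indeed the right ones, and your argument uses separability exactly where it is needed (only the separability of $\gamma$ matters, which is why the $\gamma_j$ must be distinct, both to make the excluded ratios well defined and to ensure $\gamma$ is a simple root of $g$ so the gcd is exactly $z-\gamma$ rather than a higher power). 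Two small things worth making explicit if you write this up: the gcd computed over $K(\theta)$ agrees with the one computed over the splitting field $M$ (gcd is stable under field extension), which is what lets you conclude $\gamma\in K(\theta)$; and your opening observation that the statement as phrased implicitly forces $[L:K]<\infty$ is a genuine (if minor) imprecision in the paper's formulation that you are right to repair by restricting to finite extensions.
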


For a proof of this result  we refer to \cite[Proposition 3.6.12]{Bos18}.
In the examples in this article we will only consider fields of characteristic $0$ and hence separable extensions. 
Therefore, using the above proposition, 
all the considered field extensions $K \subset L$ will always be of the form $L = K[z]/(p)$ with an irreducible polynomial $p$. 

\begin{remark}\label{r:PrimitiveElement}
Notice that it is also possible to write $L = K(\alpha)$ where $\alpha \in L$ is a root of $p$ (cf.~\cite[Proposition 3.2.6]{Bos18}). The element $\alpha$ (or $p$) is called a \emph{primitive element} of the extension $K \subset L$.
\end{remark}

\begin{example}
Let $f \in K$ be a squarefree element.
Then the polynomial $z^2-f \in K[z]$ is irreducible and $L = K[z]/(z^2-f)$ is a field extension of $K$.
We can also write $L = K(\sqrt f)$, where $\sqrt f$ denotes a square root of $f$.
\end{example}

\section{Rationalizability of field extensions}\label{s:Rationalizability}
 
In this section we prove the main theorem of the paper, \autoref{t:split}, 
and we outline how this theorem can be used to prove rationalizability of sets of square roots.

For the whole section, let $\kk$ be an algebraically closed field of characteristic $0$ and set 
$R \coloneqq  \kk[x_1,\ldots,x_n]$ and $Q \coloneqq  \Frac(R) = \kk(x_1,\ldots,x_n)$,
with $n\geq 1$; fix an algebraic closure $\overline{Q}$ of $Q$;
as before, $K$ will denote any field and $K\subset L$ a field extension;
the embedding $K\into L$ will be denoted by $\iota$;
we fix an algebraic closure $\overline{K}$ of $K$.

Let us recall the definition of rationalizability for field extensions given in the introduction.

\begin{definition}
We say that a field extension $K \subset L$ is \emph{rationalizable} if and only if there is a non-zero homomorphism $\psi \colon L \to K$.
\end{definition}

\begin{remark}
By taking the composition $\phi = \psi \circ \iota$, we get a commutative diagram
\[
\begin{tikzcd}
K \ar[rr, "\phi"] \ar[dr, hook, "\iota"'] && K \\
& L \ar[ur, "\psi"']
\end{tikzcd}
\]
Note that both $\phi$ and $\psi$ are automatically injective.
\end{remark}

\begin{definition}
Let $K$ be a field and let $\iota_i \colon K \into L_i$ be field extensions for $i \in \{1,\ldots, m\}$.
We say that the set of field extensions $\{L_1,\ldots,L_m\}$ is rationalizable if for each $i=1,\ldots,m$ the field extension $K\subset L_i$ is rationalizable and there exists a rationalizing homomorphism $\psi_i\colon L_i \to K$ such that 
$$
\psi_1\circ \iota_1 = \cdots = \psi_m \circ \iota_m \eqqcolon \phi\; .
$$
In other words, for each $i \in \{1,\ldots, m\}$ we have the following commutative diagram:
\[
\begin{tikzcd}
K \ar[rr, "\phi"] \ar[dr, hook, "\iota_i"'] && K \\
& L_i \ar[ur, "\psi_i"'] 
\end{tikzcd}
\]

\end{definition}

\begin{remark}
The central point of the definition is, that we can choose the homomorphisms $\psi_i \colon L_i \to K$ in such a way, that we get the same homomorphism $\phi = \psi_i \circ \iota_i$, independently of $i \in \{1,\ldots,m\}$.
\end{remark}

\subsection{First properties}
In this subsection  we show that the rationalizability of a square root in the sense of \cite{BF21} is indeed a special case of the rationalizability of a field extension.

\begin{proposition}
\label{p:EquivSingle}
Consider $f \in R = \kk[x_1,\ldots,x_n]$ squarefree.
Then $\sqrt{f}$ is rationalizable if and only if the field extension $Q \subset Q(\sqrt{f})\cong Q[z]/(z^2-f)$ is rationalizable.
\end{proposition}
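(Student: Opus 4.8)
The plan is to unwind both notions of rationalizability in terms of ring homomorphisms and match them up directly. Write $Q = \kk(x_1,\ldots,x_n)$ and $L = Q[z]/(z^2-f)$, with $\iota \colon Q \hookrightarrow L$ the natural inclusion. On one side, by definition $\sqrt{f}$ is rationalizable if there is a $\kk$-algebra homomorphism $\phi \colon Q \to Q$ with $\phi(f) = h^2$ for some $h \in Q$. On the other side, the extension $Q \subset L$ is rationalizable if there is a non-zero homomorphism $\psi \colon L \to Q$. The whole proof is the observation that these two pieces of data are interchangeable.

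First I would prove the forward direction. Given $\phi \colon Q \to Q$ with $\phi(f) = h^2$, I want to build $\psi \colon L \to Q$. Since $L = Q[z]/(z^2-f)$, a homomorphism out of $L$ extending a map $Q \to Q$ amounts to choosing the image of $z$, subject to the single relation $z^2 = f$. So define $\psi$ on $Q$ to be $\phi$, and send $z \mapsto h$; this is well defined precisely because $h^2 = \phi(f)$, i.e.\ the relation $z^2-f$ maps to $h^2 - \phi(f) = 0$. It remains to check $\psi$ is non-zero, which is immediate since $\psi(1) = 1$ (and in fact, as noted in the remark, any such homomorphism of fields is automatically injective).

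For the converse, suppose $\psi \colon L \to Q$ is a non-zero homomorphism. Set $\phi := \psi \circ \iota \colon Q \to Q$; this is a $\kk$-algebra homomorphism. Let $h := \psi(\bar z) \in Q$, where $\bar z$ is the class of $z$ in $L$. Applying $\psi$ to the identity $\bar z^2 = f$ (here $f$ is identified with its image $\iota(f) \in L$) gives $h^2 = \psi(\bar z)^2 = \psi(\bar z^2) = \psi(\iota(f)) = \phi(f)$, so $\phi(f)$ is a square in $Q$ and $\sqrt{f}$ is rationalizable.

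The argument is essentially a triviality once the definitions are aligned, so I do not expect a genuine obstacle; the only point requiring a word of care is the well-definedness of $\psi$ in the forward direction (that the chosen image of $z$ kills the defining relation) and the remark that homomorphisms between fields are automatically injective, so the various "non-zero" and "rationalizing" hypotheses are consistent with each other. One might also remark that this $\psi \leftrightarrow (\phi,h)$ correspondence is a bijection, which foreshadows the bookkeeping needed for the set version in \autoref{t:split}, but that is not needed for the statement at hand.
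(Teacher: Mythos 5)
Your proof is correct and follows essentially the same route as the paper's: in the forward direction you extend $\phi$ to $\psi$ by sending $z\mapsto h$ and check well-definedness via $\psi(z^2-f)=0$, and in the converse you set $\phi=\psi\circ\iota$ and observe $\phi(f)=\psi(\bar z)^2$. Nothing is missing.
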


Note that $f$ being squarefree implies the irreducibility of $z^2-f$,
assuring that  $z^2-f$ is the minimal polynomial of $\sqrt{f}$ and hence that $Q[z]/(z^2-f)$ is indeed isomorphic to $Q(\sqrt{f})$.

\begin{proof}
Assume  $\sqrt{f}$ to be rationalizable: there is a  $\phi \colon Q \to Q$ with $\phi(f) = h^2$, $h\in Q$.
Then we define
$\psi \colon Q[z]/(z^2-f) \to Q$ by mapping $q \in Q$ to $\psi(q) \coloneqq \phi(q)$ and by setting $\psi(z) \coloneqq h$.
This is indeed a well-defined homomorphism, as $\psi( z^2 - f ) = (\psi(z))^2 - \psi(f) = 0$.
Since $\psi$ is non-zero, we conclude that $Q \subset Q[z]/(z^2-f)$ is rationalizable.

Conversely, assume $Q \subset Q[z]/(z^2-f)$ to be rationalizable, so there is a non-zero homomorphism $\psi \colon Q[z]/(z^2-f)$.
Using the composition 
$$
\phi \coloneqq \psi \circ \iota \colon Q \into Q[z]/(z^2-f) \to Q\; ,
$$ 
we see that $\phi(f) = \psi(z)^2$ is a square.
Hence the square root of $f$ is rationalizable.
\end{proof}

The following lemma is a generalization of \cite[Lem. 2.7 \& Cor. 2.8]{BF21}.

\begin{lemma}\label{l:polynomial}
Let $L$ be an extension of $Q$ and write $L \cong Q[z]/(q)$ for some irreducible $q \in Q[z]$. 
Then there is an irreducible $p \in R[z]$ such that $L \cong Q[z]/(p)$.

If $q$ is monic, then $p$ can also be chosen to be monic.
\end{lemma}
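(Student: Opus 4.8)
The plan is to clear denominators. Write $q = \sum_{i=0}^{d} a_i z^i$ with $a_i \in Q = \Frac(R)$ and $a_d \neq 0$. Since $R$ is a UFD (being a polynomial ring over a field), each $a_i$ can be written as $a_i = b_i/c$ with $b_i \in R$ and a common denominator $c \in R \setminus \{0\}$. Then $c \cdot q = \sum_i b_i z^i \in R[z]$, and this is a polynomial over $R$ which, as an element of $Q[z]$, is an associate of $q$, hence still irreducible in $Q[z]$. So without loss of generality I may assume from the start that $q \in R[z]$ and that $q$ is irreducible in $Q[z]$.

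The key step is then Gauss's Lemma: a polynomial in $R[z]$ that is irreducible in $Q[z]$ factors in $R[z]$ as (content) $\times$ (primitive part), where the primitive part $p \in R[z]$ is irreducible in $R[z]$ and still irreducible in $Q[z]$. Concretely, write $q = \operatorname{cont}(q) \cdot p$ with $\operatorname{cont}(q) \in R$ and $p \in R[z]$ primitive; since $\operatorname{cont}(q)$ is a unit in $Q$, the ideals $(q)$ and $(p)$ of $Q[z]$ coincide, so $Q[z]/(q) = Q[z]/(p)$, giving the desired isomorphism $Q' \cong Q[z]/(p)$ with $p \in R[z]$ irreducible. (One should note that $R$ being a UFD is exactly what is needed for Gauss's Lemma and the notion of content to make sense; for $n = 0$, $R = \kk$ is a field and the statement is trivial.)

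For the monic refinement: if $q$ is monic of degree $d$, the clearing-denominators step above can be done so that the resulting polynomial in $R[z]$ has leading coefficient some $c \in R\setminus\{0\}$ but is generally not monic, so Gauss's Lemma alone does not suffice. Instead I would use the standard trick: if $\beta$ is a root of a monic polynomial $q \in Q[z]$ in the extension $Q'$, and $c \in R\setminus\{0\}$ is chosen so that $c^{i}$ times the coefficient of $z^{d-i}$ lies in $R$ for all $i$ (e.g. take $c$ a common denominator of all coefficients), then $\alpha := c\beta$ satisfies a monic polynomial $p \in R[z]$ obtained from $q$ by the substitution $z \mapsto z/c$ followed by multiplication by $c^{d}$; explicitly $p(z) = c^{d} q(z/c)$. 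Then $Q(\alpha) = Q(\beta) = Q'$, and $p$ is monic with coefficients in $R$; it is irreducible over $Q$ because $q$ is (the substitution is invertible over $Q$), so $Q' \cong Q[z]/(p)$.

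The main obstacle, such as it is, is the monic case: one must be careful that the substitution $z \mapsto z/c$ genuinely lands the coefficients in $R$, which is why $c$ must be taken to clear \emph{all} denominators simultaneously and then the $c^{i}$-scaling does the rest; verifying that $p = c^{d}q(z/c) \in R[z]$ is monic is a short but genuine check rather than a triviality. Everything else is a direct invocation of the UFD structure of $R$ and Gauss's Lemma.
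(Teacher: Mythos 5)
Your proof is correct and follows essentially the same route as the paper's: clear denominators using that $R$ is a UFD (the paper takes a minimal multiplier $u$ where you invoke Gauss's Lemma and the primitive part, a cosmetic difference), and for the monic case perform the standard rescaling $p(z)=c^d q(z/c)$, which is exactly the paper's $\tilde c_i = c_i u^{n-i}$ construction. No gaps.
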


\begin{proof}
As $R$ is a unique factorization domain, there is a minimal $u \in R$ such that $p \coloneqq u\cdot q \in R[z]$, by clearing denominators.
Moreover, $p$ is irreducible in $R[z]$, as any non-trivial factorization of $p$ would give rise to a factorization of $q$.
As $u$ is a unit in $Q$, we get that $Q' \cong Q[z]/(p)$.

In the case that $q$ is monic, that is, $q = z^n + c_{n-1} z^{n-1} + \cdots + c_0$, we get that $p = uq = uz^n + uc_{n-1}z^{n-1} + \cdots + uc_0$ is not monic anymore.
By defining the monic $\tilde p = z^n + \tilde c_{n-1} z^{n-1} + \cdots + \tilde c_0$ with $\tilde c_i = c_i \cdot u^{n-i}$, we get that $\tilde p(uz) = u^{n} p(z)$.
As $Q[z]/(\tilde p) \to Q[z]/(p), z \mapsto uz$ is an isomorphism, we find that $L \cong Q[z]/(\tilde p)$ with $\tilde p \in R[z]$ monic.
\end{proof}

Note that the above proof uses only that $R$ is a unique factorization domain.

We give a geometric interpretation of rationalizability of a field extension in the spirit of \cite[\S 2.2]{BF21}.

\begin{proposition}\label{p:alggeom}
Let $L$ be an extension of $Q$.
Then there exists an irreducible polynomial $p\in R[z]$ 
such that
$Q \subset L$ is rationalizable if and only if $V(p) \subset \Spec R[z]$ is unirational.
\end{proposition}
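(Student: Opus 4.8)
The plan is to reduce everything to the standard dictionary between dominant rational maps and $\kk$-embeddings of function fields, exactly along the lines of \cite[\S 2.2]{BF21}, where the special case $p = z^2-f$ is treated; the only new ingredient is the computation of the function field of a general hypersurface $V(p)$.

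First I would pin down the geometry. For $Q[z]/(p)$ to be a field extension of $Q$, as in the statement, $p$ must be irreducible over $Q$; after replacing $p$ by a primitive associate in $R[z]$ (which changes neither $Q[z]/(p)$ nor the hypersurface it cuts out) I may assume $p\in R[z]$ is irreducible, by Gauss's lemma, and of positive degree in $z$. Thus $V(p)\subset\Spec R[z]=\bA^{n+1}_\kk$ is an integral hypersurface of dimension $n$, with coordinate ring $R[z]/(p)$. The key observation is that inverting all nonzero elements of $R$ inside $R[z]/(p)$ already produces $Q\otimes_R R[z]/(p)=Q[z]/(p)$, which is a field; hence this localization is the full fraction field, so $\kk\bigl(V(p)\bigr)\cong Q[z]/(p)$. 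Moreover this isomorphism is compatible with the inclusion $Q=\kk(\bA^n_\kk)\hookrightarrow\kk(V(p))$ induced by the (dominant, generically finite) projection $V(p)\to\bA^n_\kk$, $(x_1,\dots,x_n,z)\mapsto(x_1,\dots,x_n)$; this pullback is precisely the embedding $\iota\colon Q\hookrightarrow Q[z]/(p)$.

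Next I would carry out the equivalence itself. By definition $V(p)$ is unirational iff there is a dominant rational map from some $\bA^N_\kk$ to $V(p)$; restricting such a map to a general linear subspace reduces $N$ to $\dim V(p)=n$, so unirationality of $V(p)$ is equivalent to the existence of a dominant rational map from $\bA^n_\kk$ to $V(p)$. By the (anti-)equivalence between varieties equipped with dominant rational maps and finitely generated extension fields of $\kk$ equipped with $\kk$-embeddings, this holds iff there is a homomorphism of $\kk$-algebras $\kk(V(p))\to\kk(\bA^n_\kk)=Q$, that is, under the identification above, a $\kk$-algebra homomorphism $Q[z]/(p)\to Q$. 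Any such homomorphism is non-zero (it sends $1$ to $1$) and injective (its source is a field), and conversely every non-zero homomorphism $Q[z]/(p)\to Q$ is of this form. Since the existence of a non-zero homomorphism $Q[z]/(p)\to Q$ is exactly the definition of rationalizability of the extension $Q\subset Q[z]/(p)$, this finishes the argument.

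The part requiring care --- rather than a genuine obstacle --- is the bookkeeping with $p$ (passing to a primitive associate so that $V(p)$ is honestly irreducible) together with the reduction of the number of source variables in the definition of unirationality; both are routine and already implicit in \cite[\S 2.2]{BF21}. Everything else is the standard function-field dictionary, plus the elementary remark that a ring homomorphism out of a field is injective as soon as it is non-zero.
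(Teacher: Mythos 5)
Your proof is correct and follows essentially the same route as the paper: both rest on the identification $Q[z]/(p)=\Frac(R[z]/(p))=\kk(V(p))$ and the translation of unirationality into a $\kk$-embedding of this function field into $Q=\kk(x_1,\dots,x_n)$, with a non-zero homomorphism out of a field automatically injective. You merely spell out two points the paper leaves implicit (passing to a primitive irreducible representative of $p$ so that $V(p)$ is integral, and reducing the source of the dominant rational map from $\bA^N$ to $\bA^n$), which is fine.
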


\begin{proof}
Recall that $\Spec R = \bA^n$ is rational, as $Q = \kk(x_1,\ldots,x_m)$.

From \autoref{l:polynomial} then there exists a polynomial $p\in R[z]$ such that $L\cong Q[z]/(p)$.

If $Q \subset L\cong  Q[z]/(p)$ is rationalizable, we have an (injective) homomorphism $\psi \colon  Q[z]/(p) \to Q$.
Note that $Q[z]/(p) = \Frac(R[z]/(p)) = k (V(p))$, the function field of $V(p)$, so 
\[
\iota \circ \psi \colon \Frac(R[z]/(p)) \into \kk(x_1,\ldots,x_m)
\]
implies that $V(p)$ is unirational.

Conversely, 
suppose that $V(p)$ is unirational, that is, there is an injective homomorphism
$$
k(V(p)) = Q[z]/(p) = \Frac(R[z]/(p)) \into \kk(x_1,\ldots,x_m) \cong Q.
$$
This shows that $Q \subset Q[z]/(p)$ is rationalizable.
\end{proof}

\begin{remark}
The two implications can be generalized by relaxing some hypotheses on $R$.

The first implication, i.e., that if 
$Q \subset Q[z]/(p)$ is rationalizable then $V(p)$ is unirational, holds also for any $R$ such that $\Spec R$ is unirational.

The converse implication holds also for an $R$ such that   $\kk(x_1,\ldots,x_m) \into Q=\Frac (R)$.

In particular, the statement of \autoref{p:alggeom} is true for any $Q = \Frac (R)$ where $R$ is such that $\Spec R$ is rational.
\end{remark}

Finally, also the rationalizability of a family $\{\sqrt{f_1},\ldots,\sqrt{f_m}\}$ can be understood in the context of field extensions.
Indeed, by the same arguments as in~\autoref{p:EquivSingle}, one can check that this definition generalizes the rationalizability of a set of square roots.
\begin{proposition}\label{p:EquivFamilies}
Consider $f_1,\ldots,f_m\in R$ squarefree polynomials and 
define the field extensions 
$$
Q\subset L_i\coloneqq  Q (\sqrt{f_i}) \cong \frac{Q[z]}{(z^2-f_i)}.
$$
Then the set of square roots$\{\sqrt{f_1},\ldots,\sqrt{f_m}\}$ is rationalizable if and only if the set of field extensions $\{L_1,\ldots,L_m\}$ is.
\end{proposition}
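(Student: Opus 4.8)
The plan is to mimic the proof of \autoref{p:EquivSingle}, the only new ingredient being that the compatibility condition $\psi_1 \circ \iota_1 = \cdots = \psi_m \circ \iota_m$ appearing in the definition of a rationalizable family of field extensions corresponds \emph{exactly} to using one and the same homomorphism $\phi \colon Q \to Q$ for all the square roots $\sqrt{f_i}$ simultaneously. Throughout one uses that, since each $f_i$ is squarefree, $z^2 - f_i$ is irreducible in $Q[z]$, so each $L_i$ is genuinely a field; here the base field $K$ of the definition is $Q$ itself.

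First I would treat the forward implication. Assume $\{\sqrt{f_1},\ldots,\sqrt{f_m}\}$ is rationalizable, so there is a \emph{single} homomorphism $\phi \colon Q \to Q$ with $\phi(f_i) = h_i^2$ for suitable $h_i \in Q$ and every $i$. For each $i$ define $\psi_i \colon L_i = Q[z]/(z^2 - f_i) \to Q$ by $\psi_i(q) := \phi(q)$ for $q \in Q$ and $\psi_i(z) := h_i$. This is well defined because $\psi_i(z^2 - f_i) = h_i^2 - \phi(f_i) = 0$, and it is visibly non-zero. By construction $\psi_i \circ \iota_i = \phi$ for every $i$, so all the composites agree and $\{L_1,\ldots,L_m\}$ is rationalizable.

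For the converse, assume $\{L_1,\ldots,L_m\}$ is rationalizable, with non-zero homomorphisms $\psi_i \colon L_i \to Q$ satisfying $\psi_1 \circ \iota_1 = \cdots = \psi_m \circ \iota_m =: \phi$. Then for each $i$ one computes $\phi(f_i) = \psi_i(\iota_i(f_i)) = \psi_i(f_i) = \psi_i(z^2) = \psi_i(z)^2$, using that $z^2 = f_i$ in $L_i$. Hence the single homomorphism $\phi \colon Q \to Q$ sends every $f_i$ to a square in $Q$, i.e., $\{\sqrt{f_1},\ldots,\sqrt{f_m}\}$ is rationalizable.

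There is no genuine obstacle here beyond bookkeeping; the one point to state carefully is that the simultaneous-rationalization condition in the definition of a rationalizable family of square roots (one $\phi$ working for all $f_i$) is literally the same data as the compatibility condition on the $\psi_i$ (one $\phi = \psi_i \circ \iota_i$ independent of $i$). Equivalently, one may run the argument of \autoref{p:EquivSingle} for each index $i$ separately and then observe that the resulting $\phi$ can be chosen independent of $i$ precisely when the $\psi_i$ are chosen compatibly.
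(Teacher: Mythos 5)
Your proof is correct and is exactly what the paper intends: its own proof of this proposition consists of the single line ``As in \autoref{p:EquivSingle}'', and your write-up simply carries out that argument for a family, correctly identifying the compatibility condition $\psi_1\circ\iota_1=\cdots=\psi_m\circ\iota_m$ with the requirement that a single $\phi$ rationalize all the $\sqrt{f_i}$ at once. Nothing is missing.
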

\begin{proof}
As in \autoref{p:EquivSingle}.
\end{proof}

\begin{lemma}\label{l:tower}
Let $K\eqqcolon M_0 \subset M_1 \subset M_2 \subset \cdots \subset M_m \eqqcolon M$ be a tower of extensions
such that $M_{i-1}\subset M_i$ is rationalizable for every $i=1,\ldots,m$.
Then $K\subset M$ is rationalizable.
\end{lemma}
\begin{proof}
For every $i=1,\ldots,m$, let $\psi_i\colon M_i\to M_{i-1}$ denote the rationalizing map.
We want to construct a field homomorphism
$\psi\colon M\to K$.
To do so, simply consider 
\[
\psi\coloneqq \psi_1\circ \cdots \circ \psi_m\colon M_m=M\to M_0=K . \qedhere
\]
\end{proof}

\subsection{The main theorem}
In this subsection we prove the main result of the paper, \autoref{t:split}, and we highlight its impact on the study of rationalizability of families, cf. \autoref{c:FamilySquareRoots} and \autoref{r:Strategy}.

\begin{proposition}\label{p:family}
Let $K$ be a field and  $\alpha_1,\ldots,\alpha_m$ be elements of $\overline{K}$ such that, for $i=2,\ldots,m$, 
  the extensions  $K\subset L_i\coloneqq K(\alpha_i)$ are of degree two.
If the set $\{ L_1,\ldots, L_m \}$ is rationalizable 
then $K\subset K(\alpha_1,\ldots,\alpha_m)$ is rationalizable.
\end{proposition}
\begin{proof}
Let $\phi\colon K\to K$ and $\psi_i \colon L_i \to K$ denote the rationalizing homomorphisms of the set $\{ L_1,\ldots,L_m\}$.
Note that the homomorphisms  $\psi_i$ are completely determined by $\psi_i |_K = \phi$ and the image of $\psi_i(\alpha_i)$.
We inductively construct  a sequence of field extensions 
$$
K \subset M_1 \subset M_2 \subset \cdots \subset M_m
$$ 
that fit into the following commutative diagram.
\[
\begin{tikzcd}[/tikz/cells={/tikz/nodes={shape=asymmetrical
  rectangle,text width=0.75cm,text height=1.5ex,text depth=0.3ex,align=center}}]
K \ar[rrrr, "\phi"] \ar[rd, hook] && & & K \\
& M_1 \ar[rrru, "\pi_1"'] \ar[rd, hook] \\
& & \rotatebox{5}{\raisebox{-0.6ex}{$\ddots$}} \ar[rd, hook] \\
& & & M_m \ar[ruuu, "\pi_m"']
\end{tikzcd}
\]
As starting point we take $M_1=L_1$ and $\pi_1=\psi_1$.
For $i=2,\ldots,m$ define 
$$
M_i\coloneqq M_{i-1}(\alpha_i)= K(\alpha_1,\ldots,\alpha_i).
$$
Note that $M_m=K(\alpha_1,\ldots,\alpha_m)$.
Suppose now that $\pi_1,\ldots,\pi_i$ are constructed. 
We need to define $\pi_{i+1}$.

Now consider $\alpha_{i+1}$. Let $p_{i+1}$ and $r_{i+1}$ be the minimal polynomials of $\alpha_{i+1}$ over $K$ and $M_i$, respectively.
Note that $r_{i+1}|p_{i+1}$ and $[L_{i+1}:K]=\deg(p_{i+1})=2$.
So can distinguish two cases:
\begin{itemize}
    \item $\deg(r_{i+1})=1$: in this case $M_i = M_{i+1}$, as already $\alpha_{i+1} \in M_i$. So we can define $\pi_{i+1} = \pi_i$;
    \item $\deg(r_{i+1})=2$: in this case we get that $r_{i+1} = p_{i+1}$. We define $\pi_{i+1}$ by setting $\pi_{i+1}|_{M_i}\coloneqq \pi_i$ and $\pi_{i+1}(\alpha_{i+1})\coloneqq \psi_{i+1}(\alpha_{i+1})$.
\end{itemize}
We are left to show that $\pi_{i+1}$ is well defined in the second case. Here, $\alpha_{i+1} \not\in M_i$, but satisfies some quadratic relation $\alpha_{i+1}^2 = u \alpha_{i+1} + t$ with $u,t \in K$, as $r_{i+1}=p_{i+1}$.
Using this, and recalling that $\pi_i|_K=\psi_i|_K=\psi_{i+1}|_K=\phi$  one can see that
$\pi_{i+1}$ is well defined.

By construction, $M_m=K(\alpha_1,\ldots,\alpha_m)$.
Hence we get a homomorphism $\psi\coloneqq \pi_m \colon K(\alpha_1,\ldots,\alpha_m) \to K$ and the commutative diagram,
\[
\begin{tikzcd}
K \ar[rr, "\phi"] \ar[rd, hook] && K \\
& M_m \ar[ru, "\psi=\pi_m"']
\end{tikzcd}
\]
i.e., the extension $K\subset K(\alpha_1,\ldots,\alpha_m)$ is rationalizable.
\end{proof}

The converse statement holds in greater generality.

\begin{proposition}\label{p:compositum}
Let $K$ and $\alpha_1,\ldots,\alpha_m$ be elements of $\overline{K}$;
let $L_i$ be the extension of $K$ obtained by adjoining  $\alpha_i$,
that is, $L_i=K(\alpha_i)$.
If $K(\alpha_1,\ldots,\alpha_m)$ is rationalizable, then so is the set $\{ L_1,\ldots,L_m\}$. 
\end{proposition}

\begin{proof}
Considering the compositions $\iota_i \colon L_i \into K(\alpha_1,\ldots,\alpha_m) \to K$ shows that each $L_i$ is rationalizable.
\end{proof}

Now we are ready to prove our main theorem.

\begin{proof}[Proof of \autoref{t:split}]
The first implication is a special case of \autoref{p:family}.
The other is simply \autoref{p:compositum}
\end{proof}

The following corollary is a direct application of \autoref{t:split} to the study of rationalizability of sets of square roots, the most common in a physical context.

\begin{corollary}\label{c:FamilySquareRoots}
Let $f_1,\ldots,f_m$ be polynomials in $R=\kk [x_1,\ldots,x_n]$.
There exists a polynomial $p\in R[z]$ such that the set of square roots
$\{\sqrt{f_1},\ldots,\sqrt{f_m} \}$ is rationalizable if and only if the variety $V(p)$ defined by $p$ is unirational.
\end{corollary}
\begin{proof}
Set $K=Q=\bC (x_1,\ldots,x_n)$.
Then consider the quadratic field extensions $L_i\coloneqq K(\sqrt{f_i}) \cong K[z]/(z^2-f_i)$ and the compositum field  $L\coloneqq K(\sqrt{f_1},\ldots,\sqrt{f_m})$.
By the primitive element theorem, we can write $L=K(\alpha)$ for some $\alpha\in \overline{K}$.
If $p$ denotes the minimal polynomial of $\alpha$ over $K$, then
$L\cong K[z]/(p)$.
By \autoref{p:EquivFamilies}, the rationalizability of $\{\sqrt{f_1},\ldots,\sqrt{f_m} \}$ is equivalent to the rationalizability of $\{ L_1,\ldots,L_m\}$.
Note that all the extensions $K\subset L_i$ are quadratic.
Then, by \autoref{t:split}, 
the rationalizability $\{ L_1,\ldots,L_m\}$ is equivalent to the rationalizability of $K\subset L\cong K[z]/(p)$ which, in turn, by \autoref{p:EquivSingle}, is equivalent to the unirationality of $V(p)$, concluding the proof.
\end{proof}

\begin{remark}\label{r:Strategy}
\autoref{c:FamilySquareRoots} will prove very useful in the study of rationalizability of families of square roots and represents a more solid approach to the problem than the one given in~\cite[Remark 52]{BF21}.
We remark that usually the polynomial $p$ can be explicitly computed, even by hand.

On the other hand, this result is not very handy when it comes to \emph{disprove} rationalizability of sets of square roots;
for this goal~\cite[Proposition 47]{BF21} stays a more practical tool.
\end{remark}

Looking at its proof and at \autoref{l:tower}, one can propose generalizations of \autoref{t:split}.

\begin{corollary}\label{c:Generalizations}
Let $K$ be a field and
let $\alpha_1,\ldots,\alpha_m$ be  elements of $\overline{K}$.
For $i=1,\ldots,m$, set $M_0\coloneqq K, M_i\coloneqq K(\alpha_1,\ldots,\alpha_i)$,
and $L_i\coloneqq K(\alpha_i)$.
Assume one of the following statement.
\begin{enumerate}
    \item For every $i=1,\ldots,m$, the minimal polynomial of $\alpha_{i}$ over $M_{i-1}$ is either equal to the minimal polynomial of $\alpha_i$ over $K$ or it has degree $1$ (i.e., $\alpha_i\in M_{i-1}$).
    \item For every $i=1,\ldots,m$, the extension $M_{i-1}\subset M_i$ is rationalizable.
\end{enumerate}
Then the set $\{L_1,\ldots,L_m\}$ is rationalizable if and only if $K\subset M_m=K(\alpha_1,\ldots,\alpha_m)$ is.
\end{corollary}
\begin{proof}
If $K\subset M_m$ is rationalizable, then  so is $\{L_1,\ldots,L_m\}$ by \autoref{p:compositum}.

Conversely, assume that $\{L_1,\ldots,L_m\}$ is rationalizable.
If (1) holds, then the proof goes exactly as in the proof of \autoref{p:family}.
If (2) holds, then the statement follows directly from \autoref{l:tower}.
\end{proof}

\section{Applications and further discussions}\label{s:Applications}
In this section we use \autoref{t:split} to deduce some results about rationalizability of families of square roots;
we then apply these new results to some examples taken from~\cite{BF21},
recovering the same conclusions with  shorter arguments.

The computation of a primitive element of a field extension can be done by hand (if the degree is small enough) or with the aid of any computer algebra system.
For some of the computations we used the algebra software \texttt{Magma}~\cite{Magma} and \texttt{Singular}~\cite{DGPS}, as they allow the computation of the geometric genus of a curve.

As before, $\kk$ denotes an algebraically closed field of characteristic $0$ and we define
$R \coloneqq  \kk[x_1,\ldots,x_n]$ and $Q \coloneqq  \Frac(R) = \kk(x_1,\ldots,x_n)$,
with $n\geq 1$.

\begin{corollary}\label{c:TwoLinear}
For any two linear polynomials $f,g\in Q$ the field extension $Q \subset Q (\sqrt{f}, \sqrt{g})$ is rationalizable. 
\end{corollary}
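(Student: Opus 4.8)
The plan is to use \autoref{t:split} to trade the single extension $Q\subset Q(\sqrt f,\sqrt g)$ for the family $\{Q(\sqrt f),Q(\sqrt g)\}$, and then to settle the latter by normalizing $f$ and $g$ with coordinate changes. Concretely, the compositum of $Q(\sqrt f)$ and $Q(\sqrt g)$ is $Q(\sqrt f,\sqrt g)$, and when $f,g$ are non-constant the polynomials $z^2-f$ and $z^2-g$ are irreducible in $Q[z]$ (a non-constant linear polynomial is squarefree); hence \autoref{t:split} tells us that $Q\subset Q(\sqrt f,\sqrt g)$ is rationalizable exactly when $\{Q(\sqrt f),Q(\sqrt g)\}$ is, which by \autoref{p:EquivFamilies} is equivalent to the rationalizability of the set of square roots $\{\sqrt f,\sqrt g\}$ in the sense of~\cite{BF21}. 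So I would prove that $\{\sqrt f,\sqrt g\}$ is rationalizable.

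I would first dispose of degenerate cases and record two harmless reductions. If one of $f,g$ is constant, say $f\in\kk$, then $\sqrt f\in\kk$ and $Q(\sqrt f)=Q$, so the statement reduces to the rationalizability of the single square root $\sqrt g$ of a linear polynomial, which holds since a square root of a linear polynomial is always rationalizable (cf.~\cite[Theorem 1]{BF21}); likewise, if $f$ and $g$ are proportional as polynomials then $Q(\sqrt f,\sqrt g)=Q(\sqrt f)$ and we are again in the single linear case. Thus assume $f,g$ non-constant and not proportional. Next, rationalizability of $\{\sqrt f,\sqrt g\}$ is unchanged if we precompose a rationalizing $\phi\colon Q\to Q$ with a $\kk$-algebra automorphism of $Q$ — in particular with an affine change of coordinates of $\bA^n_\kk$ — and it is unchanged if we rescale $f$ or $g$ by an element of $\kk^{\times}$ (squares remain squares, as $\kk$ is algebraically closed). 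Using these two moves I would bring $(f,g)$ into a normal form, distinguishing whether the degree-one homogeneous parts $\ell_f,\ell_g$ of $f$ and $g$ are linearly independent.

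If $\ell_f,\ell_g$ are linearly independent (so $n\ge 2$), then $f$ and $g$ extend to an affine coordinate system on $\bA^n_\kk$, so after an affine change of coordinates we may assume $f=x_1$ and $g=x_2$; then $\phi\colon Q\to Q$ acting by $x_1\mapsto x_1^2$, $x_2\mapsto x_2^2$ and fixing $x_3,\dots,x_n$ turns both $f$ and $g$ into squares. If instead $\ell_g=\mu\,\ell_f$ with $\mu\in\kk^{\times}$, I would rescale $g$ so that $\ell_g=\ell_f$; since $f$ and $g$ are not proportional this forces $g=f+c$ with $c\in\kk^{\times}$, and choosing coordinates with $f=x_1$ and then rescaling $x_1$ we may assume $(f,g)=(x_1,x_1+1)$ up to constants. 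Finally, applying the automorphism $x_1\mapsto -x_1$ and absorbing the scalar $\sqrt{-1}\in\kk^{\times}$, the pair becomes $\{\sqrt{x_1},\sqrt{1-x_1}\}$, whose rationalizability was exhibited in the Example of the introduction (the rationalization there extends to $Q$ by acting as the identity on $x_2,\dots,x_n$). Collecting the cases proves the corollary.

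The mathematical content is entirely in \autoref{t:split} and in the two explicit rationalizations $\{\sqrt{x_1},\sqrt{x_2}\}$ and $\{\sqrt{x_1},\sqrt{1-x_1}\}$; the only thing that needs care is the elementary bookkeeping — checking that the required affine coordinate changes exist, that the constant rescalings do not affect rationalizability, and that the low-degree boundary cases (a constant, or a repeated polynomial, among $f,g$) are covered. I do not expect any genuine obstacle.
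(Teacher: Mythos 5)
Your proof is correct, and the first half (invoking \autoref{t:split} together with \autoref{p:EquivFamilies} to reduce everything to the rationalizability of the set of square roots $\{\sqrt f,\sqrt g\}$, then normalizing $f=x_1$ by an affine coordinate change) coincides with the paper's. Where you diverge is in how you rationalize the pair: the paper avoids your case distinction entirely by \emph{composing} two rationalizations, in the spirit of \cite[Remark 52]{BF21} --- it first applies $\phi_1\colon x_1\mapsto x_1^2$, which makes $f$ a square and turns $g$ into a polynomial $g'$ of degree at most $2$, and then cites \cite[Theorem 1]{BF21} for the existence of a second map $\phi_2$ rationalizing $\sqrt{g'}$, so that $\phi_2\circ\phi_1$ does both at once. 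Your argument instead splits according to whether the linear parts of $f$ and $g$ are independent and produces explicit normal forms $(x_1,x_2)$ and $(x_1,1-x_1)$ with explicit rationalizing maps. The paper's route is shorter and uniform, and it illustrates the composition trick that the authors want to advertise; yours is more elementary and self-contained in that it exhibits the rationalizations concretely rather than quoting the quadratic case of \cite[Theorem 1]{BF21}, at the cost of the bookkeeping with affine changes, rescalings by elements of $\kk^\times$, and degenerate cases (which you handle correctly). One small caveat: you lean on the introduction's example for $\{\sqrt{x},\sqrt{1-x}\}$, where the displayed map should read $\sqrt{x}\mapsto 2x/(1+x^2)$ rather than $2x^2/(1+x^2)$ for the identity $\phi(1-x)=\bigl((1-x^2)/(1+x^2)\bigr)^2$ to hold; the underlying fact is of course true, so this does not affect your argument.
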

\begin{proof}
By~\autoref{t:split} $Q\subset Q (\sqrt{f}, \sqrt{g})$ is rationalizable if and only if the family of extensions $\{ \frac{Q[z]}{(z^2-f)}, \frac{Q[z]}{(z^2-g)} \}$ is, which in turn,
by~\autoref{p:EquivFamilies}, 
is equivalent to saying that the set of square roots $\{ \sqrt{f}, \sqrt{g} \}$ is rationalizable.
So in order to prove the statement it is enough to prove that $\{\sqrt{f}, \sqrt{g}\}$ is rationalizable.
In order to do so we will use the strategy already explained in~\cite[Remark 52]{BF21}.

First note that without loss of generality, we can always assume $f=x_1$.
This implies that $\sqrt{f}$ is rationalized, for instance, by the map $\phi_1\colon Q\to Q$, sending $x_1\mapsto x_1^2$ and acting as the identity on all the other variables. 
In this way we obtain 
$\phi_1(f)=x^2$ and $\phi_1(g) \eqqcolon g'$ is a polynomial of degree $2$.
Consider the square root $\sqrt{g'}$ and notice that it is rationalizable (cf.~\cite[Theorem 1]{BF21}).
Let $\phi_2\colon Q\to Q$ be the rationalizing homomorphism 
then we have $\phi_2 (g')=h^2$ for some $h\in Q$.
Consider then the composition $\phi\coloneqq \phi_2\circ \phi_1 \colon Q\to Q$ and note that $\phi (f) = (\phi_2(x))^2$ and $\phi (g) = h^2$, proving that $\{\sqrt{f}, \sqrt{g}\}$ is rationalizable.
\end{proof}

\begin{corollary}\label{c:Linearset}
Let $a_1,\ldots,a_m\in \bC$ be pairwise distinct complex numbers. 
The set $\cA \coloneqq  \{ \sqrt{x-a_1},\ldots, \sqrt{x-a_m} \}$ is rationalizable if and only if $m\leq 2$.
\end{corollary}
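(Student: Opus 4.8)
The plan is to combine \autoref{t:split} and \autoref{p:EquivFamilies} — which let us pass freely between the set of square roots, the family of quadratic extensions of $Q=\kk(x)$, and their compositum — with \autoref{p:alggeom} (equivalently, the Corollary to \cite[Theorem~1]{BF21}), which turns rationalizability of a one-variable square root into a statement about the geometric genus of $V(z^2-f)\subset\bA^2$. First I would dispose of the case $m\le 2$. For $m=1$ the polynomial $x-a_1$ is squarefree of degree one, so $\sqrt{x-a_1}$ is rationalizable by \cite[Theorem~1]{BF21} — concretely, via $x\mapsto x^2+a_1$. For $m=2$ I would invoke \autoref{c:TwoLinear} with $f=x-a_1$ and $g=x-a_2$: the extension $Q\subset Q(\sqrt{x-a_1},\sqrt{x-a_2})$ is rationalizable, hence by \autoref{t:split} and \autoref{p:EquivFamilies} the set $\{\sqrt{x-a_1},\sqrt{x-a_2}\}$ is rationalizable.

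For the converse, suppose $m\ge 3$ and, aiming at a contradiction, that $\cA$ is rationalizable via some $\phi\colon Q\to Q$. The same $\phi$ witnesses rationalizability of the sub-set $\{\sqrt{x-a_1},\sqrt{x-a_2},\sqrt{x-a_3}\}$, so by \autoref{p:EquivFamilies} and \autoref{t:split} the extension $Q\subset L:=Q\bigl(\sqrt{x-a_1},\sqrt{x-a_2},\sqrt{x-a_3}\bigr)$ is rationalizable, say via a non-zero homomorphism $\psi\colon L\to Q$. Now the element $w:=\sqrt{x-a_1}\,\sqrt{x-a_2}\,\sqrt{x-a_3}\in L$ satisfies $w^2=(x-a_1)(x-a_2)(x-a_3)=:g$, so $Q(w)\cong Q[z]/(z^2-g)$ is an intermediate field $Q\subset Q(w)\subset L$. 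Since $\psi$ is a non-zero homomorphism of fields it is injective, so its restriction to $Q(w)$ is a non-zero homomorphism $Q(w)\to Q$; that is, $Q\subset Q(w)$ is rationalizable, equivalently (by \autoref{p:alggeom}) the curve $V(z^2-g)\subset\bA^2$ is unirational. But $a_1,a_2,a_3$ are pairwise distinct, so $g$ is a squarefree cubic and $V(z^2-g)$ has geometric genus $1$; being a curve over the algebraically closed field $\kk$, it is unirational only if rational, which it is not. This contradiction gives $m\le 2$.

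The only genuine input here is the last step: a square root of a squarefree cubic in one variable is not rationalizable — this is precisely where the genus-$1$ obstruction enters, and it may be quoted from the Corollary to \cite[Theorem~1]{BF21} or read off from \autoref{r:GenusHigherOrder} with $e=2$. An alternative to the intermediate-field argument in the second paragraph is to apply \cite[Proposition~47]{BF21} directly: if $\cA$ were rationalizable then $\sqrt{(x-a_1)(x-a_2)(x-a_3)}$ would be too, contradicting the same genus computation; I would probably record both routes, as the second illustrates the disproof strategy noted in \autoref{r:Strategy}.
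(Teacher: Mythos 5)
Your proof is correct, and the cases $m\le 2$ are handled exactly as in the paper ($m=1$ via \cite[Theorem 1]{BF21}, $m=2$ via \autoref{c:TwoLinear}). For $m\ge 3$ your primary route differs from the paper's in an interesting way. The paper simply forms the full product $p=\prod_{i=1}^m(x-a_i)$, quotes \cite[Theorem 2]{BF21} for the non-rationalizability of $\sqrt{p}$, and then quotes \cite[Proposition 47]{BF21} (``if a set is rationalizable, so is the square root of any subproduct'') to conclude. You instead re-derive the relevant instance of that proposition internally: you pass to the compositum $L=Q(\sqrt{x-a_1},\sqrt{x-a_2},\sqrt{x-a_3})$ via \autoref{t:split}, observe that $w=\sqrt{x-a_1}\sqrt{x-a_2}\sqrt{x-a_3}$ generates an intermediate quadratic extension $Q(w)\cong Q[z]/(z^2-g)$ with $g$ a squarefree cubic, and restrict the rationalizing homomorphism to $Q(w)$ to hit the genus-one obstruction through \autoref{p:alggeom}. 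This is a sound and self-contained argument (the restriction of an injective field homomorphism to an intermediate field is indeed a non-zero homomorphism, and Lüroth upgrades unirationality to rationality for curves), and it has the merit of showing that \autoref{t:split} already implies the subproduct criterion of \cite[Proposition 47]{BF21} in this setting, rather than treating it as an external black box; the paper's version is shorter but leans on two quoted results. Since you also record the paper's route as an alternative, nothing is missing.
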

\begin{proof}
If $m=1$, then $\cA$ is trivially rationalizable (cf.~\cite[Theorem 1]{BF21}).

If $m=2$, then \autoref{c:TwoLinear} implies that $\cA$ is rationalizable.

If $m\geq 3$, then the polynomial $p\coloneqq \prod_{i=1}^m (x-a_i)$ is of degree $m\geq 3$ with $m$ distinct roots.
It follows that the square root $\sqrt{p}$ is not rationalizable (cf.~\cite[Theorem 2]{BF21}) and hence, in turn, $\cA$ is not rationalizable by~\cite[Proposition 47]{BF21}.
\end{proof}

\begin{remark}
We observe that \autoref{c:Linearset} can also be proved using~\autoref{r:GenusHigherOrder}.
\end{remark}

\begin{example}[{c.f. \cite[Example 52]{BF21}}]
The set $\{ \sqrt{x-1}, \sqrt{x-2} \}$
is rationalizable by \autoref{c:Linearset}.
\end{example}

\begin{corollary}\label{c:QuadraticJacobianset}
Let $a,b,c\in \bC$ three pairwise distinct complex numbers. 
Then the set 
$$
\cA\coloneqq \{ \sqrt{(x-a)(x-b)}, \sqrt{(x-a)(x-c)}, \sqrt{(x-b)(x-c)}\}
$$ 
is rationalizable.
\end{corollary}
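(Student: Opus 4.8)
The plan is to mimic the proof of \autoref{c:TwoLinear}. Since $\cA$ consists of square roots of polynomials, rationalizability of $\cA$ means precisely that there is a homomorphism $\phi\colon Q\to Q$ for which $\phi\bigl((x-a)(x-b)\bigr)$, $\phi\bigl((x-a)(x-c)\bigr)$ and $\phi\bigl((x-b)(x-c)\bigr)$ are all squares in $Q$, so it suffices to produce one such $\phi$. The main reduction is the identity
$$
(x-a)(x-b)\cdot(x-a)(x-c)=(x-a)^2\,(x-b)(x-c),
$$
which shows that for any (automatically injective) homomorphism $\phi$, once $\phi\bigl((x-a)(x-b)\bigr)$ and $\phi\bigl((x-a)(x-c)\bigr)$ are squares, so is $\phi\bigl((x-b)(x-c)\bigr)=\phi\bigl((x-a)(x-b)\bigr)\phi\bigl((x-a)(x-c)\bigr)/\phi(x-a)^2$, using that $\phi(x-a)\neq0$. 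Thus it is enough to rationalize the pair $\{\sqrt{(x-a)(x-b)},\sqrt{(x-a)(x-c)}\}$, the two square roots sharing the root $a$.

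To do so I would first rationalize $\sqrt{(x-a)(x-b)}$ by the explicit substitution obtained from parametrizing the conic $y^2=(x-a)(x-b)$ through its point $(a,0)$ by the pencil of lines $y=t(x-a)$: this yields $\phi_1\colon Q\to Q$ acting on the first variable by $x\mapsto \frac{ax^2-b}{x^2-1}$ and as the identity on the remaining ones. A short computation gives $\phi_1(x-a)=\frac{a-b}{x^2-1}$ and $\phi_1(x-b)=\frac{(a-b)x^2}{x^2-1}$, so $\phi_1\bigl((x-a)(x-b)\bigr)$ is a square, whereas
$$
\phi_1\bigl((x-a)(x-c)\bigr)=\frac{(a-b)\bigl[(a-c)x^2+(c-b)\bigr]}{(x^2-1)^2},
$$
which up to the square $(x^2-1)^{-2}$ equals the polynomial $g(x):=(a-b)(a-c)x^2+(a-b)(c-b)$. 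Since $a,b,c$ are pairwise distinct the leading coefficient $(a-b)(a-c)$ is nonzero, hence $\deg g=2$ and $\sqrt g$ is rationalizable by \cite[Theorem 1]{BF21}; choose $\phi_2\colon Q\to Q$ with $\phi_2(g)=h^2$, $h\in Q$. Then $\phi:=\phi_2\circ\phi_1$ makes both $\phi\bigl((x-a)(x-b)\bigr)$ and $\phi\bigl((x-a)(x-c)\bigr)$ squares (note $\phi_2(x^2-1)\neq0$), and by the reduction above $\phi\bigl((x-b)(x-c)\bigr)$ is a square as well, so $\cA$ is rationalizable.

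The only delicate point is checking that $g$ genuinely has degree $2$, so that \cite[Theorem 1]{BF21} (equivalently, the genus-zero criterion) applies — this is exactly where the hypothesis that $a,b,c$ are pairwise distinct enters — the remaining steps being the routine non-vanishing of denominators, automatic since field homomorphisms are injective. As an alternative one could argue through \autoref{t:split}, \autoref{p:EquivFamilies} and \autoref{p:alggeom}: the compositum $L=Q\bigl(\sqrt{(x-a)(x-b)},\sqrt{(x-a)(x-c)}\bigr)$ already contains $\sqrt{(x-b)(x-c)}$ and has degree $4$ over $Q$, and substituting $w=\sqrt{(x-b)/(x-c)}$ (which makes $x$ a rational function of $w$, so that $Q(w)$ is a purely transcendental extension of $\kk$) exhibits $L$ as $Q(w)\bigl(\sqrt{q(w)}\bigr)$ for a quadratic polynomial $q$; hence, for a primitive element $p$ of $L$, the curve $V(p)$ is rational, thus unirational.
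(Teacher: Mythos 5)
Your proof is correct, but it takes a genuinely different route from the paper's. The paper follows the strategy of \autoref{r:Strategy}: after normalizing $a=0$, $b=1$, $c=\lambda$, it observes that $\sqrt{f_3}=\tfrac{1}{x}\sqrt{f_1}\sqrt{f_2}$ (the same multiplicative relation you exploit), so the compositum is the degree-$4$ extension $\bC(x)(\sqrt{f_1},\sqrt{f_2})$; it then computes an explicit degree-$4$ primitive polynomial $p$, checks (with computer algebra) that the curve $V(p)$ has geometric genus $0$, and concludes via \autoref{p:alggeom} and \autoref{t:split}. You instead bypass \autoref{t:split} entirely and construct the rationalizing homomorphism by hand: the product relation reduces the problem to the two square roots sharing the root $a$, the conic parametrization through $(a,0)$ gives $\phi_1$ with $\phi_1(x-a)=\tfrac{a-b}{x^2-1}$ and $\phi_1(x-b)=\tfrac{(a-b)x^2}{x^2-1}$ (both computations check out), and the residual square class is a genuine degree-$2$ polynomial $g$ precisely because $a,b,c$ are pairwise distinct, so \cite[Theorem 1]{BF21} applies and $\phi=\phi_2\circ\phi_1$ does the job. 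This is exactly the pattern of the paper's proof of \autoref{c:TwoLinear}, pushed one step further. What your approach buys is an elementary, fully explicit argument with no genus computation and no computer algebra; what it loses is that it does not illustrate the compositum-plus-primitive-element method that this corollary is meant to showcase, and it relies on the special multiplicative dependence among the three radicands, whereas the paper's method would apply verbatim even without noticing that relation (at the cost of a larger primitive polynomial). Your closing alternative via $w=\sqrt{(x-b)/(x-c)}$ is only a sketch, but the main argument stands on its own without it.
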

\begin{proof}
By applying an affine transformation of $\bA^1\subset \bP^1$ we can assume without loss of generality that $a=0$ and $b=1$ and write $c=\lambda$. 
Set 
\begin{align*}
    f_1&\coloneqq (x-a)(x-b)=x(x-1),\\
    f_2&\coloneqq (x-a)(x-c)=x(x-\lambda),\\
    f_3&\coloneqq (x-b)(x-c)=(x-1)(x-\lambda),
\end{align*}
and, for $i=1,2,3$, consider the field extension 
$$
L_i\coloneqq \frac{\bC(x)[z]}{(z^2-f_i)}\cong \bC (x) (\sqrt{f_i}).
$$
Then the rationalizability of $\cA$ is equivalent to the rationalizability of the set $\{ L_1, L_2, L_3\}$.
From~\autoref{t:split} this is equivalent to the rationalizability of the compositum field $L=\bC (x) (\sqrt{f_1}, \sqrt{f_2}, \sqrt{f_3})$.

First notice that, as 
$$
\sqrt{f_3}=\sqrt{(x-1)(x-\lambda)}= \frac{1}{x}\sqrt{x(x-1)}\sqrt{x(x-\lambda)}=\frac{1}{x}\sqrt{f_1}\sqrt{f_2},
$$
the compositum field $L$ is equal to just $\bC (x) (\sqrt{f_1}, \sqrt{f_2})$.
It is then immediate to see that $[L:\bC (x)]=4$.
Let $p\in \bC (x)[z]$ be the irreducible polynomial of the primitive element of $L$ over $\bC (x)$, that is,
$p$ is an irreducible polynomial of degree $4$ such that
$$
L\cong \frac{\bC (x)[z]}{(p)}.
$$
Notice that in general, by possibly multiplying by an element of $\bC(x)$, we can always assume that $p\in \bC [x,z]$.
In particular, in our case
$$
p=z^4 + (-4x + 2\lambda + 2)xz^2 + (\lambda^2 - 2\lambda + 1)x^2.
$$
From~\autoref{p:alggeom} we know that $L$ is rationalizable if and only if $V(p)$ is unirational.
The variety $V(p)\subset \bA^2$ is a curve and
its genus is $0$, proving the statement.
\end{proof}

With the same argument used to prove~\autoref{c:QuadraticJacobianset} one can easily prove also the following result.

\begin{corollary}\label{c:QuadrAlphJacTwoVar}
Let $f$ be a linear polynomial in $\bC (x,y)\setminus \{ x,y \}$.
Then the  following set is rationalizable.
$$
   \cA\coloneqq  \{ \sqrt{xy}, \sqrt{xf}, \sqrt{yf} \}
$$
\end{corollary}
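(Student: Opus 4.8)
The plan is to mimic exactly the proof of~\autoref{c:QuadraticJacobianset}, exploiting the identical multiplicative structure. First I would apply~\autoref{t:split} together with~\autoref{p:EquivFamilies} to reduce the rationalizability of $\cA = \{\sqrt{xy}, \sqrt{xf}, \sqrt{yf}\}$ to the rationalizability of the compositum field
$$
L = \bC(x,y)(\sqrt{xy}, \sqrt{xf}, \sqrt{yf}),
$$
viewed as a field extension of $Q = \bC(x,y)$. The key observation, as before, is that the three square roots are not independent: since $\sqrt{xf} \cdot \sqrt{yf} = f\sqrt{xy}$ (up to sign), one has $\sqrt{yf} = \frac{f}{x}\sqrt{xy}\cdot \frac{1}{\sqrt{xf}} \cdot \sqrt{xf}$, or more directly $\sqrt{yf} \in Q(\sqrt{xy},\sqrt{xf})$ because $\sqrt{xy}\sqrt{xf} = x\sqrt{yf}$. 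Hence $L = Q(\sqrt{xy},\sqrt{xf})$, which is a multiquadratic extension of $Q$ of degree $4$ (one checks $xy$, $xf$ and $xy\cdot xf = x^2 yf$ are non-squares in $Q$, using that $x,y,f$ are pairwise non-associate squarefree linear polynomials).

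Next I would invoke the primitive element theorem (valid here since $\operatorname{char} \kk = 0$) to write $L \cong Q[z]/(p)$ for an irreducible $p \in Q[z]$ of degree $4$; after clearing denominators we may take $p \in \bC[x,y,z]$. Then by~\autoref{p:alggeom}, rationalizability of $Q \subset L$ is equivalent to unirationality of the surface $V(p) \subset \Spec \bC[x,y,z]$. Writing $f = \alpha x + \beta y + \gamma$ with $(\alpha,\beta,\gamma)$ such that $f \notin \{x,y\}$, one computes (by hand or with \texttt{Magma}/\texttt{Singular}) the explicit quartic $p$; for instance, taking $\sqrt{xy}+\sqrt{xf}$ as primitive element gives a polynomial analogous to the one in~\autoref{c:QuadraticJacobianset}. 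It then remains to show $V(p)$ is unirational. Since $V(p)$ is birational to $\operatorname{Spec}$ of $Q(\sqrt{xy},\sqrt{xf})$, which is a rational function field in the two algebraically independent elements $u = \sqrt{xy}$ and $v = \sqrt{xf}$ over $\bC$ once we express $x = v^2/f$-type relations — more carefully, $\sqrt{x}$ and one further square root parametrize everything — one exhibits a dominant rational map $\bA^2 \dashrightarrow V(p)$ directly, or equivalently a rationalizing homomorphism $\phi\colon Q \to Q$ as in the proof of~\autoref{c:TwoLinear}: first rationalize $\sqrt{xy}$ and $\sqrt{xf}$ simultaneously (a set of two square roots of degree-$2$ polynomials in two variables), then check that $\sqrt{yf}$ is automatically rationalized because it lies in the field generated by the other two.

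The main obstacle I expect is the last step: unlike the one-variable case, proving unirationality of the surface $V(p)$ (equivalently, rationalizability of the set $\{\sqrt{xy}, \sqrt{xf}\}$ of two quadratic square roots in two variables) is not covered by the elementary degree/genus criteria and requires an explicit parametrization. The cleanest route is to note that $Q(\sqrt{xy}, \sqrt{xf})$ is already a purely transcendental extension of $\bC$: setting $s = \sqrt{x}$ and observing that $\sqrt{xy} = s\sqrt{y}$, $\sqrt{xf} = s\sqrt{f}$, the compositum is contained in $\bC(s, \sqrt{y}, \sqrt{f})$; since $f$ is linear in $x,y$, $\sqrt{f} = \sqrt{\alpha s^2 + \beta y + \gamma}$, and adjoining $\sqrt{y}$ and then $\sqrt{f}$ to $\bC(s)$ gives a field of transcendence degree $2$ that is rational by the degree-$2$ case of~\cite[Theorem 1]{BF21} applied fibrewise. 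Thus $L$ embeds into a rational function field $Q'$, and composing with an isomorphism $Q' \cong Q$ yields the desired $\psi\colon L \to Q$, so $Q \subset L$ is rationalizable, whence $\cA$ is rationalizable. I would keep the write-up parallel to~\autoref{c:QuadraticJacobianset}, replacing the genus-$0$ curve computation by this unirationality argument for the surface, and remark that, as there, the conclusion depends only on the degrees and pairwise non-proportionality of $x$, $y$, $f$.
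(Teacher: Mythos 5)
Your proposal is correct, but the decisive step is genuinely different from the paper's. The paper follows the template of \autoref{c:QuadraticJacobianset} to the end: it normalizes $f=x+ay+b$, computes the explicit quartic $p$ with $L\cong \bC(x,y)[z]/(p)$, passes to the projective closure $\overline{S}\subset\bP^3$ of $V(p)$, identifies its singular locus (two double lines meeting in a point), and invokes Urabe's classification of non-normal quartic surfaces to conclude that $\overline{S}$ is rational. You instead bypass the quartic model entirely: writing $s=\sqrt{x}$, $t=\sqrt{y}$, you observe $\sqrt{xy}=st$, $\sqrt{xf}=s\sqrt{f}$, $\sqrt{yf}=t\sqrt{f}$, so $L\into \bC(s,t,\sqrt{f})$ with $f=s^2+at^2+b$ quadratic in $s,t$; the degree-$2$ criterion of \cite[Theorem~1]{BF21} (or the elementary rationality of a quadric surface) gives an embedding $\bC(s,t,\sqrt{f})\into\bC(s,t)\cong Q$, and the composite nonzero homomorphism $L\to Q$ is exactly what rationalizability of $Q\subset L$ requires, after which \autoref{t:split} and \autoref{p:EquivFamilies} finish as in both write-ups. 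Your route is shorter, needs no computer algebra and no classification result, and in principle yields an explicit rationalizing map by unwinding the quadric parametrization; the paper's route yields more geometric information about the quartic model. Two small cautions: your parenthetical claim that $Q(\sqrt{xy},\sqrt{xf})$ is ``a rational function field in $u=\sqrt{xy},v=\sqrt{xf}$'' is false as stated (since $x$ is only quadratic over $\bC(u,v)$) — but you correct course and your final argument does not use it; and \cite[Theorem~1]{BF21} gives unirationality rather than rationality of $\bC(s,t,\sqrt{f})$, which is however all you need, since any nonzero homomorphism $L\to Q$ suffices for rationalizability.
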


\begin{proof}
Notice that without loss of generality we may (and do) assume that
$f=x+ay+b$ with $a,b\in\bC$.
Using \autoref{t:split} we know that $\cA$ is rationalizable if and only if the extension $L\coloneqq \bC(x,y) (\sqrt{xy}, \sqrt{xf}, \sqrt{yf} ) \supset \bC(x,y)$  is.
Proceeding as in the proof of \autoref{c:QuadraticJacobianset}, one can show that
$$
L\cong \frac{\bC (x,y)[z]}{(p)},
$$
with 
\begin{align*}
    p \, = \,  & x^4 + 2(a - 1)x^3y + 2bx^3 + (a-1)^2x^2y^2 + 2b(a -  1)x^2y + \\
    &  - 2x^2z^2 + b^2x^2  -2(a + 1)xyz^2 - 2bxz^2 + z^4 \; .
\end{align*}

From \autoref{p:alggeom} we know that $L$ is rationalizable if and only if $S=V(p)$ is rational. 
Let $\overline{S} \subset \bP^3$ be the projective closure of $S$.
One can easily verify that $\overline{S}$ is a quartic surface with a singular locus given by the union of two lines $L_1$ and $L_2$ taken with multiplicity $2$; 
the two lines intersect in one point.
Using Urabe's classification of non-normal quartic surfaces (cf.~\cite[\S 1]{Ura86}) we then deduce that $\overline{S}$ is rational, concluding the proof.
\end{proof}

The (rationalizable) situation of Corollaries \ref{c:QuadraticJacobianset} and \ref{c:QuadrAlphJacTwoVar} involving polynomials of degree two is very special in the sense that they give rise to unirational varieties.
In general this does not happen, as the following example shows.

\begin{example}[{c.f. \cite[Example 49]{BF21}}]
In order to study the set
$$
\cA\coloneqq  
 \{
 \sqrt{x},
 \sqrt{1+4x},
 \sqrt{x (x-4)}\}
$$
consider the following family of field extensions of $Q = \kk(x)$:
\[
\{ Q[z]/(z^2-x),\ Q[z]/(z^2-(4x+1)),\ Q[z]/(z^2-x(x-4)) \}.
\]
The compositum of these field extensions is
$Q' = Q[z]/(p)$ with
\[
\begin{split}
p = & z^8 + (-24x+12)z^6 + (144x^2-72x+86)z^4 + \\ 
     & + (-256x^3+96x^2+88x+300)z^2 + 1296x^2+1800x+625
\end{split}
\]
where the roots of $p$ are $\pm \sqrt{x} \pm \sqrt{4x+1} \pm \sqrt{x-4}$.

The geometric genus of the curve $\Spec \kk[x,z]/(p)$ is $1$, hence $Q \subset Q'$ and therefore $\cA$ is not rationalizable.
\end{example}

Notice that Corollaries~\ref{c:TwoLinear}, \ref{c:Linearset},   \ref{c:QuadraticJacobianset} and \ref{c:QuadrAlphJacTwoVar}  are all positive examples of the reverse of \cite[Proposition 47]{BF21},
leading us to formulate \autoref{conj:Sets}.
The next natural step would be to generalize the conjecture to sets of field extensions.
We refrain from doing this at the moment as we do not have strong evidence in this direction.

Another question comes from \autoref{t:split} and \autoref{c:Generalizations}.
If $K$ is any field and $\alpha_1,\alpha_2\in\overline{K}$ are such that $K\subset K(\alpha_i)$ is rationalizable for $i=1,2$,
can we conclude that $K(\alpha_1)\subset K(\alpha_1,\alpha_2)$ is rationalizable?
At the moment the question is wide open.
A positive answer would lead to a stronger form of the main theorem, allowing us to remove the assumption on the degree of the field extensions.

\bibliographystyle{plain}
\bibliography{biblio.bib}

\end{document}